\newtheorem{theorem}{Theorem}[section]
\newtheorem{proposition}[theorem]{Proposition}
\theoremstyle{remark}
\newtheorem{remark}[theorem]{Remark}
\numberwithin{equation}{section}
\newcommand{\Order}{\mathcal{O}}
\newcommand{\into}{\hookrightarrow}
\newcommand{\onto}{\twoheadrightarrow}
\newcommand{\isomto}{\overset{\sim}{\to}}
\newcommand{\tensor}{\mathbin{\otimes}}
\newcommand{\closure}[1]{\overline{#1}}
\newcommand{\genby}[1]{\langle #1 \rangle}
\newcommand{\Z}{\mathbb{Z}}
\newcommand{\Q}{\mathbb{Q}}
\newcommand{\R}{\mathbb{R}}
\newcommand{\C}{\mathbb{C}}
\newcommand{\Gm}{\mathbf{G}_{m}}
\newcommand{\tor}{\mathrm{tor}}
\newcommand{\alg}[1]{\mathbf{#1}}
\newcommand{\var}{\;\cdot\;}
\newcommand{\ideal}[1]{\mathfrak{#1}}
\newcommand{\GL}{\mathit{GL}}
\DeclareMathOperator{\Hom}{Hom}
\DeclareMathOperator{\Spec}{Spec}
\DeclareMathOperator{\Pic}{Pic}
\DeclareMathOperator{\Lie}{Lie}
\DeclareFontFamily{U}{wncy}{}
\DeclareFontShape{U}{wncy}{m}{n}{<->wncyr10}{}
\DeclareSymbolFont{mcy}{U}{wncy}{m}{n}
\DeclareMathSymbol{\Sha}{\mathord}{mcy}{"58}
\newsavebox\myVerb 
\title[Duality invariance of Faltings heights]
{Duality invariance of Faltings heights, Hodge line bundles and global periods}
\author{Takashi Suzuki}
\address{
	Department of Mathematics, Chuo University,
	1-13-27 Kasuga, Bunkyo-ku, Tokyo 112-8551, Japan
}
\email{tsuzuki@gug.math.chuo-u.ac.jp}
\date{September 3, 2025}
\subjclass[2020]{11G10 (Primary) 11G50, 11R58, 14G20, 14G25 (Secondary)}
\keywords{abelian variety; Faltings height;
Hodge bundle; global period; base change conductor}
\begin{document}

\begin{abstract}
	We prove that an abelian variety and its dual over a global field
	have the same Faltings height
	and, more precisely, have isomorphic Hodge line bundles,
	including their natural metrized bundle structures.
	More carefully treating real places,
	we also show that these abelian varieties have
	the same real and global periods
	that appear in the Birch--Swinnerton-Dyer conjecture.
\end{abstract}

\maketitle

\tableofcontents


\section{Introduction}


\subsection{Main results}

Let $K$ be a global field.
Let $X$ be the $\Spec$ of the ring of integers of $K$
in the number field case
and the proper smooth curve with function $K$
in the function field case.
Let $A$ be an abelian variety over $K$ of dimension $g$
with N\'eron model $\mathcal{A}$ over $X$.
Let $\omega_{\mathcal{A}}$ be the Hodge line bundle of $\mathcal{A}$,
namely the pullback of $\Omega^{g}_{\mathcal{A} / X}$
by the zero section $X \into \mathcal{A}$.
It has a natural metrized line bundle structure
in the number field case (\cite[Section 3]{Fal86}).
Let $h(A)$ be the Faltings height of $A$,
which is the Arakelov degree of $\omega_{\mathcal{A}}$
in the number field case
and the usual degree of $\omega_{\mathcal{A}}$
in the function field case.
In this paper, we will prove the following
duality invariance of Faltings heights:

\begin{theorem}[Theorem \ref{0014}] \label{0010}
	Let $A$ and $B$ be abelian varieties over $K$
	dual to each other.
	Then $h(A) = h(B)$.
\end{theorem}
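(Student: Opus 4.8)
The plan is to prove the stronger statement that $\omega_{\mathcal{A}}$ and $\omega_{\mathcal{B}}$ are isomorphic as metrized line bundles on $X$, from which $h(A) = h(B)$ follows immediately by taking Arakelov degrees (resp.\ degrees). Since the degree of a line bundle on $X$ is computed from any one rational section together with its orders at the finite places and its metric norms at the archimedean places, I would organize the argument into three steps: (i) a canonical isomorphism $\omega_A \isomto \omega_B$ over the generic fiber $\Spec K$; (ii) a comparison of the two integral structures $\omega_{\mathcal{A}}, \omega_{\mathcal{B}}$ at each finite place; and (iii) a comparison of the Faltings metrics at each archimedean place.

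For step (i) I would use the de Rham realization, which has the advantage of being polarization-free (essential, since we want duality invariance, not polarization invariance). Recall $\Lie(B) \cong H^1(A,\Order_A)$, so $\omega_B \cong \bigl(\det H^1(A,\Order_A)\bigr)^{\vee}$, whereas $\omega_A = \det H^0(A,\Omega^1_{A/K})$. The Hodge filtration gives a short exact sequence $0 \to H^0(A,\Omega^1) \to H^1_{dR}(A) \to H^1(A,\Order_A) \to 0$, hence $\det H^1_{dR}(A) \cong \omega_A \tensor \det H^1(A,\Order_A)$. Because the de Rham cohomology ring of an abelian variety is the exterior algebra on $H^1_{dR}$, the top power $\det H^1_{dR}(A) \cong H^{2g}_{dR}(A)$ is canonically trivial via the trace/fundamental class. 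This forces $\det H^1(A,\Order_A) \cong \omega_A^{-1}$, and therefore $\omega_B \cong \omega_A$ canonically over $K$.

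For step (ii) the difficulty is that the generic isomorphism need not carry the lattice $\omega_{\mathcal{A}}$ onto $\omega_{\mathcal{B}}$ inside the common $\det \omega_A \cong \det \omega_B$, and the local discrepancies contribute to the difference of degrees. My plan is to pass to a finite extension $L/K$ over which both $A$ and $B$ acquire semistable reduction (simultaneously possible, since $A$ is semistable iff $B$ is). Over $\Order_L$ the de Rham comparison extends to an isomorphism of integral Hodge bundles for semistable Néron models, so the stable Hodge degrees agree. The difference between the Hodge degree at a place $v$ and its stable counterpart is exactly what is measured by the base change conductor $c_v(\var)$ of Chai, so the local discrepancy between $\omega_{\mathcal{A}}$ and $\omega_{\mathcal{B}}$ at $v$ reduces to comparing $c_v(A)$ and $c_v(B)$. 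The crux of the whole theorem is thus the duality invariance $c_v(A) = c_v(B)$; I expect this local statement — comparing the defects of the canonical maps $\Lie \mathcal{A} \tensor \Order_L \to \Lie \mathcal{A}_L$ and its analogue for $\mathcal{B}$, via Grothendieck's autoduality of the identity components of the Néron models and the pairing of the component groups — to be the main obstacle.

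For step (iii) I would verify that the canonical isomorphism of step (i) is, at each archimedean place, an isometry for Faltings' $L^2$-metrics, up to a constant depending only on $g$ (and hence irrelevant to the difference $h(A) - h(B)$). Over $\C$ this is a Hodge-theoretic computation: one checks that the symplectic self-duality of $H^1_{dR}(A_{\C}) = H^1(A(\C),\C)$ is compatible with the harmonic/$L^2$ metric, so that the induced metrics on the two presentations of $\det$ coincide. For the purposes of Theorem~\ref{0010} every real place may be treated uniformly through $A(\C)$ with its conjugation, since the Arakelov degree only sees the metric norms; the finer real structure that enters the real and global periods requires the additional care anticipated in the introduction and is best deferred to the secondary results. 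Combining (i)--(iii) yields an isomorphism of metrized line bundles $\omega_{\mathcal{A}} \cong \omega_{\mathcal{B}}$, and hence $h(A) = h(B)$.
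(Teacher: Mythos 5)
Your architecture (generic-fiber isomorphism, finite-place lattice comparison via semistable reduction and base change conductors, archimedean isometry) mirrors the paper's \emph{second}, stronger route through Theorem \ref{0011}, but the pivotal step (ii) is left unproved: you reduce everything at the non-semistable places to the duality invariance $c_v(A) = c_v(B)$ of Chai's base change conductors and then flag it as the ``main obstacle'' you expect to handle by Grothendieck's pairing on component groups and autoduality of the identity components of the N\'eron models. That equality is precisely Chai's conjecture on duality invariance of base change conductors, a long-open problem that is not accessible by the route you sketch (the component-group pairing and autoduality of $\mathcal{A}^{0}$ do not control the length of the cokernel of $\Lie \mathcal{A} \tensor_{\Order_{K}} \Order_{L} \to \Lie \mathcal{A}_{L}$); in the paper it is imported wholesale from \cite[Theorem 1.2]{OS23}, a separate paper devoted to resolving it. Without that input (or an explicit citation of it) your argument does not close. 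There are secondary gaps as well: the claim that over a semistable model ``the de Rham comparison extends to an isomorphism of integral Hodge bundles'' is asserted, not proved --- the paper needs Propositions \ref{0000}--\ref{0003} (extension of the Weil pairing to the N\'eron-model kernels, perfectness via Raynaud group schemes, and a fractional-ideal symmetry trick) to get the lattice statement, although for mere degree equality in the semistable case you could instead cite \cite[Chapter IX, Lemma 2.4]{MB85} and \cite[Corollary 2.1.3]{Ray85}. The archimedean isometry in your step (iii) is likewise only sketched; the paper establishes it by an explicit uniformization computation (Proposition \ref{0005}), and ``compatibility of the symplectic self-duality with the $L^2$ metric'' would need to be turned into such a computation.

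You should also be aware that the paper's own proof of the statement you were asked about (Theorem \ref{0014}) is entirely different and sidesteps your main obstacle. Since the number field case is already known by Raynaud, only the function field case is at stake, and there the paper argues globally: by \cite[Theorem 10.3]{GRS24}, for any isogeny $A_{1} \to A_{2}$ the difference $\deg \Lie \mathcal{A}_{1} - \deg \Lie \mathcal{A}_{2}$ equals $\mu_{A_{2}/K} - \mu_{A_{1}/K}$, the difference of dimensions of the Tate--Shafarevich group schemes of \cite{Suz20a}; and for dual abelian varieties these group schemes have Breen--Serre dual identity components, hence equal dimension, forcing $\mu_{A/K} = \mu_{B/K}$ and thus $h(A) = h(B)$. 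So if your goal is only the equality of heights (rather than the finer bundle-theoretic Theorem \ref{0011}), there is a proof that requires neither semistable reduction theory at each place nor any statement about base change conductors.
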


This is known in the number field case by Raynaud
(\cite[Corollary 2.1.3]{Ray85}).
In the function field case,
it is known when $A$ and $B$ have semistable reduction everywhere
by Moret-Bailly
(\cite[Chapter IX, Lemma 2.4]{MB85};
see also \cite[Remark 2.1.5]{Ray85}
and \cite[Proposition 2.16]{GLFP25}).
The function field, non-semistable case was open
as mentioned in \cite[Remark 2.1.5]{Ray85}
and \cite[Remark 2.17]{GLFP25}.
Our proof is a simple application of results of
the author's work \cite{GRS24} with Ghosh and Ray.

We will also prove a more precise result
using instead the author's other work \cite{OS23} with Overkamp.
To state our result,
first note that there is a canonical isomorphism
$\omega_{A} \cong \omega_{B}$
of the generic fibers of $\omega_{\mathcal{A}}$ and $\omega_{\mathcal{B}}$
given by (denoting dual by $\ast$)
	\begin{equation} \label{0013}
			(\det \Lie B)^{\ast}
		\cong
			(\det H^{1}(A, \Order))^{\ast}
		\cong
			H^{g}(A, \Order)^{\ast}
		\cong
			\Gamma(A, \Omega^{g}),
	\end{equation}
where the first isomorphism is by deformation,
the second by cup product
(\cite[Chapter VII, Section 4.21, Theorem 10]{Ser88})
and the third by Serre duality.

\begin{theorem}[Theorem \ref{0004}] \label{0011}
	Under the above isomorphism $\omega_{A} \cong \omega_{B}$,
	their line subbundles $\omega_{\mathcal{A}}$ and $\omega_{\mathcal{B}}$
	correspond to each other.
	In particular, we have
	$\omega_{\mathcal{A}} \cong \omega_{\mathcal{B}}$
	as $\Order_{X}$-modules.
\end{theorem}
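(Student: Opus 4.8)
The plan is to prove that the line subbundles $\omega_{\mathcal{A}}$ and $\omega_{\mathcal{B}}$ inside $\omega_A \cong \omega_B$ coincide by verifying the statement locally at each place of $K$. Since $\omega_{\mathcal{A}}$ and $\omega_{\mathcal{B}}$ are line bundles on the Dedekind scheme (or curve) $X$ sitting inside the common generic fiber, two such subbundles agree if and only if they agree after localizing and completing at every closed point $v \in X$. Thus I would first reduce to a purely local question over the completed local ring $\mathcal{O}_{X,v}$ (a complete discrete valuation ring with fraction field $K_v$), where I must compare the two lattices $\omega_{\mathcal{A}} \tensor \mathcal{O}_{X,v}$ and $\omega_{\mathcal{B}} \tensor \mathcal{O}_{X,v}$ inside the one-dimensional $K_v$-vector space $\omega_{A} \tensor K_v = \Gamma(A, \Omega^g) \tensor K_v$.

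Next I would bring in the results of the author's joint work \cite{OS23} with Overkamp, which is cited precisely for this more refined statement. The expected input is a canonical local comparison isomorphism between the Hodge/cotangent lattices of the Néron models $\mathcal{A}$ and $\mathcal{B}$ at each place, refining the generic identification \eqref{0013}. Concretely, the deformation-theoretic first isomorphism in \eqref{0013} identifies $(\det \Lie B)^{\ast}$ with $\det H^1(A, \Order)^{\ast}$, and the key point is that this identification is integral: over $\mathcal{O}_{X,v}$ the Lie algebra $\Lie \mathcal{B}_v$ of the Néron model of the dual and the coherent cohomology $H^1(\mathcal{A}_v, \Order)$ of the Néron model (or a suitable model) are dual lattices under a perfect pairing coming from the duality between $\mathcal{A}$ and $\mathcal{B}$. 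Granting that the cup-product and Serre-duality isomorphisms (the second and third maps in \eqref{0013}) also extend to integral isomorphisms of the relevant $\mathcal{O}_{X,v}$-lattices, one concludes that $\omega_{\mathcal{B}} \tensor \mathcal{O}_{X,v}$ is carried exactly onto $\omega_{\mathcal{A}} \tensor \mathcal{O}_{X,v}$.

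I would then assemble these local identifications into a global one: since the two subbundles of $\omega_A$ have the same stalk at every closed point of $X$ and already agree on the generic fiber by construction, they are equal as subsheaves, which gives $\omega_{\mathcal{A}} \cong \omega_{\mathcal{B}}$ as $\Order_X$-modules and in particular equality of the underlying lattices. The ``in particular'' clause then follows immediately. Note that at this stage I am only asserting an isomorphism of $\Order_X$-modules, so the archimedean metric structure need not enter; the finite-place lattice comparison suffices for the statement as written.

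The hard part will be the integral refinement of the deformation isomorphism in \eqref{0013}, that is, showing that the duality between $A$ and $B$ induces a \emph{perfect} pairing between the integral lattices at places of bad (in particular non-semistable) reduction, so that $\Lie \mathcal{B}$ and $H^1(\mathcal{A}, \Order)$ are genuinely dual $\mathcal{O}_{X,v}$-lattices rather than merely dual up to a nontrivial index. This is exactly the phenomenon that fails for naive models and is the reason the non-semistable function-field case was previously open; the work \cite{OS23} is presumably designed to supply precisely this perfectness via a careful choice of models and a duality-compatible pairing, and verifying that its output applies uniformly at all places — semistable, additive, and archimedean alike — is where the substance of the argument lies.
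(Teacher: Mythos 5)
Your first reduction --- that the statement is Zariski-local on $X$ and can be checked over the completion at each closed point --- is correct and is also the paper's first step. But from that point on there is a genuine gap: you defer the entire local statement to an ``expected input'' from \cite{OS23} that this reference does not provide. What \cite{OS23} proves is the duality invariance $c(A) = c(B)$ of Chai's base change conductors, a purely \emph{numerical} equality of lengths. In the paper this is used only for a reduction step: if $L/K$ is a Galois extension over which $A$ and $B$ become semistable, then $c(A)=c(B)$ says that the inclusions $(\det\Lie\mathcal{A})\tensor_{\Order_K}\Order_L \into \det\Lie\mathcal{A}_L$ and $(\det\Lie\mathcal{B})\tensor_{\Order_K}\Order_L \into \det\Lie\mathcal{B}_L$ have cokernels of the same length, so the general local case follows from the semistable local case. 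A statement about lengths of cokernels cannot by itself locate the two lattices inside the canonical $K$-line; it only transfers the problem to the semistable situation, where one must still prove the lattices agree.

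That semistable case is the real content, and it is absent from your proposal. There is no integral perfect pairing between $\Lie\mathcal{B}$ and $H^1(\mathcal{A},\Order)$ of N\'eron models to appeal to: N\'eron models are not proper, and the deformation-theoretic isomorphism in \eqref{0013} does not naively extend to them, so the ``integral refinement'' you posit is not available in that form. What the paper does instead is choose an isogeny $A \to B$ (a polarization) with kernel $N$ and dual kernel $M$, extend the Weil pairing $N \times_K M \to \Gm$ to the quasi-finite flat schematic closures $\mathcal{N}\times_{\Order_K}\mathcal{M}\to\Gm$ in the N\'eron models, prove perfectness when $\mathcal{N}$ and $\mathcal{M}$ are finite by passing to Raynaud group schemes, and then use the determinant-of-Lie-complex machinery of \cite{GRS24} to conclude that the fractional ideal $\det\Lie\mathcal{A}\tensor(\det\Lie\mathcal{B})^{\tensor -1}\subset K$ equals its own inverse, hence equals $\Order_K$ since $\ideal{p}_K^{n}=\ideal{p}_K^{-n}$ forces $n=0$. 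Your closing paragraph correctly identifies that the ``hard part'' is perfectness of an integral pairing at bad places, but asserting that \cite{OS23} ``presumably'' supplies it is not an argument --- and in fact it supplies something strictly weaker, sufficient only for the semistable reduction, not for the semistable case itself.
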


This implies $h(A) = h(B)$ by taking the degrees
in the function field case,
giving a second (independent) proof of Theorem \ref{0010}.
A weaker form of the second sentence of Theorem \ref{0011} has been known:
\cite[Corollary 2.1.3]{Ray85} shows the existence of an isomorphism of the tensor squares
$\omega_{\mathcal{A}}^{\tensor 2} \cong \omega_{\mathcal{B}}^{\tensor 2}$
in the number field case%
\footnote{
	Raynaud's proof uses a relation between
	different ideals of isogenies and their duals
	(\cite[Theorem 2.1.1]{Ray85}).
	Over a function field, however,
	any isogeny of degree divisible by $p$ is either inseparable
	or with inseparable dual.
	The different ideal of a (generically) inseparable isogeny is zero
	by the formulas in \cite[(4.9.5), (4.9.6)]{Ill85}.
}
and \cite[Chapter IX, Lemma 2.4]{MB85} shows the existence of an isomorphism
$\omega_{\mathcal{A}}^{\tensor N} \cong \omega_{\mathcal{B}}^{\tensor N}$
for some $N \ge 1$
in the function field, semistable case.
Hence the second sentence of Theorem \ref{0011} is a tiny improvement
even in the number field case.
These being said,
the point of Theorem \ref{0011} is that
the isomorphism on the generic fibers is the canonical one \eqref{0013}.

See also Yuan's work \cite{Yua21} for this kind of study
of Hodge bundles themselves (rather than their degrees)
of abelian varieties over function fields
(though \cite{Yua21} does not discuss duality aspects
and it is more about the full Hodge bundle
$e^{\ast} \Omega^{1}_{\mathcal{A} / X}$
than its determinant).

We can further refine Theorem \ref{0011} in the number field case
taking metrics into account.
In this case, note that
$\omega_{\mathcal{A}}$ has another metrized bundle structure
given by integration over real points $A(\R)$
for each real place of $K$ (see \eqref{0019})
and integration over complex points $A(\C)$
for each complex place of $K$,
while Faltings's original metrized bundle structure uses
integration over complex points $A(\C)$
for all infinite places.
The Arakelov degree of the former metrized bundle structure is
the definition of the \emph{global period} of $A / K$
that appears in the right-hand side of the Birch--Swinnerton-Dyer conjecture
(\cite[Conjecture 2.1 (2)]{DD10}, \cite[Definition 2.1]{DD15}).
Let us call the former the \emph{BSD metrized bundle structure}
and the latter the \emph{Faltings metrized bundle structure}.

\begin{theorem}[Theorems \ref{0012}, \ref{0031}] \label{0029}
	Assume that $K$ is a number field.
	The isomorphism
	$\omega_{\mathcal{A}} \cong \omega_{\mathcal{B}}$
	in Theorem \ref{0011}
	preserves both the Faltings metrized bundle structures
	and the BSD metrized bundle structures.
	In particular, $A$ and $B$ have the same Faltings height
	and the same global period.
\end{theorem}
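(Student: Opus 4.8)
The plan is to reduce the statement to a purely local assertion at each infinite place and then to a computation in Hodge theory. By Theorem \ref{0011} the isomorphism $\omega_{\mathcal{A}}\cong\omega_{\mathcal{B}}$ already identifies the underlying $\Order_{X}$-modules and restricts to the canonical generic isomorphism \eqref{0013}; since the Arakelov degree is the sum of the finite-part degree (controlled by Theorem \ref{0011}) and the logarithms of the archimedean norms, and since each metrized bundle structure is defined place by place at the infinite places, it suffices to prove that \eqref{0013} is an isometry at every real and complex place for each of the two metrics. At a complex place the Faltings and BSD norms coincide, both being defined by integration over $A(\C)$, so the theorem splits into two tasks: (a) \eqref{0013} is an isometry for the integration-over-$A(\C)$ norm at every infinite place, and (b) at every real place it is an isometry for the integration-over-$A(\R)$ norm.

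For task (a) I would base change to $\C$ and rewrite the Faltings norm through Serre duality. Writing $A(\C)=V/\Lambda$ with $V=\Lie A$, a generator $\alpha$ of $\Gamma(A,\Omega^{g})=\det V^{\ast}$ satisfies
\[
  \|\alpha\|^{2}\doteq\int_{A(\C)}\alpha\wedge\bar\alpha=\big\langle\alpha,\bar\alpha\big\rangle,
\]
where $\doteq$ hides a constant depending only on $g$, the pairing $\langle\,,\rangle$ is Serre duality between $\Gamma(A,\Omega^{g})$ and $H^{g}(A,\Order)$ valued in $H^{g}(A,\Omega^{g})\cong\C$, and $\bar\alpha\in H^{g}(A,\Order)$ is the conjugate Hodge class. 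The three arrows of \eqref{0013} are exactly the deformation identification $\Lie B=H^{1}(A,\Order)$, the cup-product isomorphism $\det H^{1}(A,\Order)\cong H^{g}(A,\Order)$, and this Serre duality, so under \eqref{0013} the Faltings norm on $\Gamma(B,\Omega^{g})$ is transported to the expression built from the \emph{same} pairing and the \emph{same} conjugation. The cleanest way to turn this into an equality of norms is to pass to Betti cohomology: the norm equals, up to the universal $g$-constant, the covolume of the period lattice $H_{1}(A,\Z)\subset\Lie A$ against the measure attached to $\alpha\wedge\bar\alpha$, and for $B=\hat A$ the period lattice is the $\Z$-dual $H^{1}(A,\Z)=H_{1}(A,\Z)^{\vee}$ under the Poincar\'e pairing. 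Dual lattices have reciprocal covolumes, and the factor produced by the cup product and Serre duality in \eqref{0013} is precisely the compensating one; hence the two norms agree. This settles task (a) and with it the complex-place part of the BSD claim.

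For task (b) I would base change to $\R$ and analyze real points. The antiholomorphic involution $\sigma$ on $A(\C)=V/\Lambda$ has fixed locus $A(\R)$, a compact real Lie group whose identity component is the real torus $A(\R)^{0}=V^{\sigma}/(\Lambda\cap V^{\sigma})$; writing $c(A)=\#\pi_{0}(A(\R))$, the real period is
\[
  \int_{A(\R)}|\omega|=c(A)\cdot\operatorname{covol}_{V^{\sigma}}\!\big(\Lambda\cap V^{\sigma}\big)\cdot|\omega|_{0},
\]
with $|\omega|_{0}$ the value of $\omega$ on a unit-covolume frame of $V^{\sigma}$. The analogous data for $B=\hat A$ are governed by the dual lattice, so the identity-component covolumes for $A$ and $B$ are reciprocal up to the same factor as in task (a), while the component counts $c(A)$ and $c(B)$ enter multiplicatively. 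The content of task (b) is therefore that the product of the component count and the identity-component covolume is transported correctly by \eqref{0013}. Here I would invoke the description of the real N\'eron-model structure and its duality behaviour furnished by \cite{OS23}, which relates $\pi_{0}(A(\R))$ and $\pi_{0}(B(\R))$ together with the two real lattices through the single integral Poincar\'e pairing; combined with the integral identification of $\omega_{\mathcal{A}}$ and $\omega_{\mathcal{B}}$ from Theorem \ref{0011}, this pins down the normalization and yields $\int_{A(\R)}|\omega|=\int_{B(\R)}|\omega|$.

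The main obstacle is exactly task (b). At complex places the norm is symmetric under conjugation and the Serre-duality/lattice-duality bookkeeping is clean, so $h(A)=h(B)$ falls out essentially as in the known number-field case \cite{Ray85}. At real places, by contrast, the component groups $\pi_{0}(A(\R))$ and $\pi_{0}(B(\R))$ need not have the same order, so the equality of real periods cannot be seen component by component; it must emerge from a compensation between the component counts and the identity-component covolumes dictated by the integral duality pairing. Getting this compensation exactly right—together with the precise powers of $2$ and the orientation signs attached to $A(\R)$—is the delicate point, and it is where the integral and real-place input of \cite{OS23} and Theorem \ref{0011} is indispensable.
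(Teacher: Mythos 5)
Your global reduction --- split the Arakelov degree into finite and archimedean contributions, use Theorem \ref{0011} for the finite part, and prove that \eqref{0013} is an isometry place by place at the infinite places, separately for the integration-over-$A(\C)$ norm and the integration-over-$A(\R)$ norm --- is exactly the paper's structure (Propositions \ref{0005} and \ref{0020}). In your task (a), however, the crucial step is asserted rather than proved: you say that ``the factor produced by the cup product and Serre duality in \eqref{0013} is precisely the compensating one'' for the reciprocal covolumes of dual lattices, but that factor \emph{is} the entire content of the complex-place statement. The paper establishes it by an explicit computation: writing $A(\C) = \C^{g}/M\Z^{2g}$ and $B(\C)$ via $(M^{\mathrm{T}})^{-1}$, it uses the normalization $\exp(2\pi i \var)\colon H^{1}(\Lambda,\R/\Z) \isomto H^{1}(\Lambda,\Gamma(V,\Order^{\times}))$ of $B(\C)\cong\Pic^{0}(A)$ to identify the basis dual to $(d z_{n}^{\ast})$ with the anti-holomorphic classes $\pi\, d\closure{z_{n}}$, and then evaluates the Serre-duality pairing with its $(2\pi i)^{-g}$ normalization to obtain the correspondence \eqref{0026}, i.e.\ $d z_{1}\wedge\dots\wedge d z_{g} \leftrightarrow \pm(\det M)\, d z_{1}^{\ast}\wedge\dots\wedge d z_{g}^{\ast}$. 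Without tracking these factors of $2$ and $\pi$ one cannot conclude that the norms $C(g)2^{g}|\det M|$ and $C(g)2^{g}|\det M|^{-1}$ match up; this is a computation you must actually perform.

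The real-place part (your task (b)) has a more serious flaw. The input you invoke does not exist: \cite{OS23} is about base change conductors of abelian varieties over complete discretely valued fields with perfect residue field of \emph{positive characteristic}; it contains no ``real N\'eron-model structure'' and nothing relating $\pi_{0}(A(\R))$ to $\pi_{0}(B(\R))$. In the paper, \cite{OS23} enters only through the non-archimedean Theorem \ref{0011}; the archimedean arguments are deliberately independent of it and purely classical. Moreover, your guiding premise --- that $\#\pi_{0}(A(\R))$ and $\#\pi_{0}(B(\R))$ ``need not have the same order,'' so that equality of real periods must emerge from a compensation between component counts and covolumes --- is the opposite of what happens. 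The paper's covolume computation (via \eqref{0030}, giving $\det Q = 2^{g}(\det M)/\#\pi_{0}(A(\R))$) shows that with suitably normalized bases the identity-component integrals on both sides equal $1$, so the two real periods are exactly $\#\pi_{0}(A(\R))$ and $\#\pi_{0}(B(\R))$; the proof then closes by showing these component counts are \emph{always equal} for dual abelian varieties over $\R$, which follows from the symmetry of $\Lambda/(\Lambda^{c=1}\oplus\Lambda^{c=-1})$ under replacing $c$ by $-c$ together with \cite[Chapter I, Remark 3.7]{Mil06}. As written, your real-place argument rests on a citation that cannot supply the needed fact and on an expectation that is false, so it does not constitute a proof.
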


This gives another proof of $h(A) = h(B)$ in the number field case.
The duality invariance of global periods in Theorem \ref{0029} also follows from
Dokchitser--Dokchitser's unconditional isogeny invariance
of the BSD formula in \cite[Theorem 4.3]{DD10},
which is a global result.
In contrast, our Theorem \ref{0029} more precisely gives a comparison of periods
for each infinite place
and hence is purely local over $\C$ and $\R$.
Our proof does not even require the abelian varieties
to be definable over number fields.


\subsection{Outline of the proofs}

The proof of Theorem \ref{0010} first uses results of \cite{GRS24}
that say that $h(A_{1}) - h(A_{2})$
for any isogenous abelian varieties $A_{1}$ and $A_{2}$ is equal to
the difference of the dimensions (or the ``$\mu$-invariants'') of
the Tate--Shafarevich schemes of $A_{1}$ and $A_{2}$
in the sense of \cite{Suz20a}.
But these group schemes have the same dimension
if $A_{1}$ is dual to $A_{2}$
by the duality result in \cite{Suz20a}.

For Theorem \ref{0011},
first note that the statement of this theorem
(but a priori not of Theorem \ref{0010}) is purely local at finite places.
Hence we may replace $K$ by a non-archimedean local field.
The duality invariance $c(A) = c(B)$
of Chai's base change conductors (\cite[Section 1]{Cha00})
proved in \cite{OS23} reduces the statement to the case
where $A$ and $B$ have semistable reduction.
In the semistable case,
the Raynaud group scheme construction
(\cite[Chapter III, Theorem C.15]{Mil06})
essentially reduces the statement to the good reduction case.
The good case is just a relative version of \eqref{0013}.

We actually allow any perfect field of positive characteristic
as the constant field of the function field $K$.
Correspondingly, Theorem \ref{0010} is proved for
any proper smooth geometrically connected curve
over a perfect field of positive characteristic
and the local version of
Theorem \ref{0011} is proved for
any complete discrete valuation field
with perfect residue field of positive characteristic.

On the other hand, the statement and the proof of Theorem \ref{0029}
are purely about abelian varieties over $\C$ and $\R$
and hence essentially very classical.
Simple applications of Hodge theory and uniformization over $\C$
with some special care of complex conjugations over $\R$ are sufficient.

As a convention,
all group schemes in this paper are assumed commutative.


\subsection{Note on concurrent work}

After a preprint version of this paper was uploaded to arXiv,
the author learned that Vologodsky also obtained
Theorem \ref{0011}, the duality invariance of base change conductors
(both only for the finite residue field case)
and Theorem \ref{0029}
independently at more or less the same time.%
\begin{lrbox}\myVerb%
    \footnotesize\verb|https://researchseminars.org/talk/MITNT/71/|%
\end{lrbox}%
\footnote{
    The abstract and the video of Vologodsky's talk at MIT
    on March 8th, 2023 titled
    ``Dual abelian varieties over a local field have equal volumes''
    are available at:
    \par\noindent
    \usebox\myVerb
}
His proof for Theorem \ref{0011} and $c(A) = c(B)$ uses
Tate--Milne's duality (for finite residue fields)
while our proof uses the author's duality \cite{Suz20a} and \cite{Suz20b}
(for general perfect residue fields) among other similarities and differences.


\subsection{Acknowledgments}

The author is grateful to
Tim Dokchitser, Vladimir Dokchitser,
Richard Griffon, Samuel Le Fourn, Fabien Pazuki,
Jishnu Ray and the referees
for helpful discussions and comments.
Special thanks to Vadim Vologodsky for the information about his work.


\section{Heights}

We recall the constructions in \cite[Sections 4, 5, 8 and 9]{GRS24}.
Let $X$ be an irreducible quasi-compact regular scheme of dimension $1$
with perfect residue field of positive characteristic at closed points.
Let $K$ be its function field.
Let $A_{1}$ and $A_{2}$ be abelian varieties over $K$
with N\'eron models $\mathcal{A}_{1}$ and $\mathcal{A}_{2}$,
respectively, over $X$.
Let $f \colon A_{1} \to A_{2}$ be an isogeny over $K$.
Let $\mathcal{N} \subset \mathcal{A}_{1}$ be
the schematic closure of the kernel of $f$ in $\mathcal{A}_{1}$,
which is a quasi-finite flat separated group scheme over $X$.
Let $R \Lie \mathcal{N} = l^{\vee}_{\mathcal{N}} \in D^{b}(\Order_{X})$ be
its Lie complex (\cite[Chapter VII, Section 3.1.1]{Ill72}),
which is (represented by) a perfect complex of $\Order_{X}$-modules.
Let $\det R \Lie \mathcal{N}$ be its determinant invertible sheaf
(\cite[Tag 0FJW]{Sta25}).

The fppf quotient $\mathcal{A}_{2}' := \mathcal{A}_{1} / \mathcal{N}$
exists as a quasi-compact smooth separated group scheme over $X$
by \cite[Theorem 4.C]{Ana73}.
We have the induced morphism
$\mathcal{A}_{2}' \to \mathcal{A}_{2}$
and hence a diagram with exact rows
	\begin{equation} \label{0006}
		\begin{CD}
				0
			@>>>
				\mathcal{N}
			@>>>
				\mathcal{A}_{1}
			@>>>
				\mathcal{A}_{2}'
			@>>>
				0
			\\ @. @. @. @VVV \\
			@. @. @. \mathcal{A}_{2}.
		\end{CD}
	\end{equation}
Since $\mathcal{A}_{2}' \to \mathcal{A}_{2}$ is
an isomorphism on the generic fibers,
the induced morphism
$\det \Lie \mathcal{A}_{2}' \into \det \Lie \mathcal{A}_{2}$
on the determinant invertible sheaves of the Lie algebras
determines a unique effective divisor $\ideal{c}(f)$ on $X$
such that
	\[
			\det \Lie \mathcal{A}_{2}' \tensor \Order_{X}(\ideal{c}(f))
		\isomto
			\det \Lie \mathcal{A}_{2}.
	\]
This divisor $\ideal{c}(f)$ is called
the \emph{conductor divisor} of $f$
(\cite[Definition 5.1 (1)]{GRS24}).

\begin{proposition} \label{0007}
	We have a canonical isomorphism
		\[
					\det \Lie \mathcal{A}_{1}
				\tensor
					(\det \Lie \mathcal{A}_{2})^{\tensor -1}
			\cong
					\det R \Lie \mathcal{N}
				\tensor
					\Order_{X}(- \ideal{c}(f))
		\]
	of $\Order_{X}$-modules,
	where $(\var)^{\tensor -1}$ denotes the dual bundle.
\end{proposition}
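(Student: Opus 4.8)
The plan is to realize both sides of the claimed isomorphism as determinants of terms in a single distinguished triangle of Lie complexes attached to the top row of \eqref{0006}, and then to reconcile them using the definition of $\ideal{c}(f)$. First I would feed the short exact sequence
\[
    0 \to \mathcal{N} \to \mathcal{A}_{1} \to \mathcal{A}_{2}' \to 0
\]
into Illusie's theory of the cotangent complex (\cite{Ill72}): the transitivity triangle for the cotangent complexes of $\mathcal{A}_{1} \to \mathcal{A}_{2}' \to X$, pulled back along the zero sections and combined with the identification of the relative term $e^{\ast} L_{\mathcal{A}_{1} / \mathcal{A}_{2}'}$ with the co-Lie complex of $\mathcal{N}$ (using that $\mathcal{A}_{1} \to \mathcal{A}_{2}'$ is an fppf torsor under $\mathcal{N}$), yields a canonical distinguished triangle of co-Lie complexes, whose dual is a distinguished triangle
\[
    R \Lie \mathcal{N} \to R \Lie \mathcal{A}_{1} \to R \Lie \mathcal{A}_{2}' \to R \Lie \mathcal{N}[1]
\]
in $D^{b}(\Order_{X})$. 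Here $R \Lie \mathcal{N}$ is perfect since $\mathcal{N}$ is quasi-finite flat over the regular one-dimensional scheme $X$, while $R \Lie \mathcal{A}_{1}$ and $R \Lie \mathcal{A}_{2}'$ are concentrated in degree $0$ and equal to the locally free sheaves $\Lie \mathcal{A}_{1}$ and $\Lie \mathcal{A}_{2}'$ because $\mathcal{A}_{1}$ and $\mathcal{A}_{2}'$ are smooth.

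Next I would apply the determinant functor of \cite[Tag 0FJW]{Sta25}, which is multiplicative along distinguished triangles, to this triangle. With the smooth identifications above, this produces a canonical isomorphism
\[
        \det \Lie \mathcal{A}_{1}
    \cong
        \det R \Lie \mathcal{N} \tensor \det \Lie \mathcal{A}_{2}',
\]
equivalently $\det \Lie \mathcal{A}_{1} \tensor (\det \Lie \mathcal{A}_{2}')^{\tensor -1} \cong \det R \Lie \mathcal{N}$. It then remains to compare $\mathcal{A}_{2}'$ with $\mathcal{A}_{2}$: the defining isomorphism $\det \Lie \mathcal{A}_{2}' \tensor \Order_{X}(\ideal{c}(f)) \isomto \det \Lie \mathcal{A}_{2}$ of the conductor divisor gives $\det \Lie \mathcal{A}_{2}' \cong \det \Lie \mathcal{A}_{2} \tensor \Order_{X}(- \ideal{c}(f))$, and substituting this into the previous isomorphism and rearranging the tensor factors yields the asserted formula.

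The main obstacle is the first step: producing the distinguished triangle of Lie complexes and verifying that its relative term is genuinely the co-Lie complex of $\mathcal{N}$, so that the induced determinant isomorphism is canonical and not merely an abstract isomorphism of line bundles. Everything afterward is formal manipulation of invertible sheaves and effective divisors; the only point deserving care there is to keep track of the canonicity of the determinant isomorphisms, since the later applications of this proposition rely on having canonical identifications rather than just isomorphism classes.
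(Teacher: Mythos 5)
Your proposal is correct and follows essentially the same route as the paper: the paper's proof applies $R\Lie$ to \eqref{0006} to get the distinguished triangle $R \Lie \mathcal{N} \to \Lie \mathcal{A}_{1} \to \Lie \mathcal{A}_{2}'$, citing \cite[Chapter VII, Proposition 3.1.1.5]{Ill72} for exactly this, then applies $\det$ and the defining property of $\ideal{c}(f)$. The only difference is that where you sketch a derivation of the triangle from the transitivity triangle and the torsor structure of $\mathcal{A}_{1} \to \mathcal{A}_{2}'$, the paper simply invokes Illusie's proposition, which packages that argument.
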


\begin{proof}
	Applying $R \Lie$ to \eqref{0006},
	we have a diagram with distinguished triangle row
	\[
		\begin{CD}
				R \Lie \mathcal{N}
			@>>>
				\Lie \mathcal{A}_{1}
			@>>>
				\Lie \mathcal{A}_{2}'
			\\ @. @. @VVV \\
			@. @. \Lie \mathcal{A}_{2}
		\end{CD}
	\]
	by \cite[Chapter VII, Proposition 3.1.1.5]{Ill72}.
	Now apply $\det$.
\end{proof}

We recall the following result from \cite{GRS24}.
It will be used in the next section
(but not in this section).

\begin{proposition} \label{0021} \mbox{}
	\begin{enumerate}
	\item \label{0015}
		Let $\mathcal{A}$ and $\mathcal{B}$ be abelian schemes over $X$
		dual to each other.
		Then there exists a canonical isomorphism
		$\det \Lie \mathcal{A} \cong \det \Lie \mathcal{B}$
		of $\Order_{X}$-modules
		(which is the relative version of \eqref{0013}).
	\item \label{0016}
		Let $\mathcal{N}$ and $\mathcal{M}$ be
		finite flat group schemes over $X$
		Cartier dual to each other.
		Then there exists a canonical isomorphism
		$\det R \Lie \mathcal{N} \cong (\det R \Lie \mathcal{M})^{\tensor -1}$
		of $\Order_{X}$-modules.
	\item \label{0017}
		Let $0 \to \mathcal{N} \to \mathcal{A}_{1} \to \mathcal{A}_{2} \to 0$
		be an exact sequence of group schemes over $X$
		such that the $\mathcal{A}_{i}$ are abelian schemes
		and $\mathcal{N}$ finite flat.
		Let $0 \to \mathcal{M} \to \mathcal{B}_{2} \to \mathcal{B}_{1} \to 0$
		be its dual exact sequence.
		Then the diagram
			\[
				\begin{CD}
							\det \Lie \mathcal{A}_{1}
						\tensor
							(\det \Lie \mathcal{A}_{2})^{\tensor -1}
					@=
							\det \Lie \mathcal{B}_{1}
						\tensor
							(\det \Lie \mathcal{B}_{2})^{\tensor -1}
					\\ @| @| \\
						\det R \Lie \mathcal{N}
					@=
						(\det R \Lie \mathcal{M})^{\tensor -1}
				\end{CD}
			\]
		commutes,
		where the upper (respectively, lower) horizontal isomorphism is
		the isomorphism in \eqref{0015} (respectively, \eqref{0016})
		and the vertical isomorphisms are the natural ones.
	\item \label{0022}
		The isomorphisms in \eqref{0015} and \eqref{0016} are compatible
		with Zariski localization on $X$.
	\end{enumerate}
\end{proposition}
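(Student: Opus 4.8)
The plan is to establish \eqref{0015} and \eqref{0016} by independent constructions and then to deduce the compatibility \eqref{0017} by unwinding them, with the locality \eqref{0022} falling out formally. For \eqref{0015} I would globalize the three isomorphisms of \eqref{0013} over $X$. Write $\pi \colon \mathcal{A} \to X$ for the structure morphism; since $\mathcal{A}$ is an abelian scheme the sheaves $R^{i} \pi_{*} \Order_{\mathcal{A}}$ and $\pi_{*} \Omega^{i}_{\mathcal{A}/X}$ are locally free and commute with base change, so cohomology and base change applies throughout. The deformation isomorphism $\Lie \mathcal{B} \cong R^{1} \pi_{*} \Order_{\mathcal{A}}$, the cup-product isomorphism $\det R^{1} \pi_{*} \Order_{\mathcal{A}} \cong R^{g} \pi_{*} \Order_{\mathcal{A}}$ coming from $R^{i} \pi_{*} \Order_{\mathcal{A}} \cong \wedge^{i} R^{1} \pi_{*} \Order_{\mathcal{A}}$, and relative Serre duality $R^{g} \pi_{*} \Order_{\mathcal{A}} \cong (\pi_{*} \Omega^{g}_{\mathcal{A}/X})^{\vee} = \omega_{\mathcal{A}}^{\tensor -1} = \det \Lie \mathcal{A}$ then compose to the desired $\det \Lie \mathcal{B} \cong \det \Lie \mathcal{A}$. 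Each arrow is the family version of its counterpart in \eqref{0013} and is manifestly Zariski-local.

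For \eqref{0016} I would feed the Cartier pairing $\mathcal{N} \times \mathcal{M} \to \Gm$ into the functoriality of the Lie complex. Using $\Lie \Gm \cong \Order_{X}$ this yields a pairing $R \Lie \mathcal{N} \tensor R \Lie \mathcal{M} \to \Order_{X}$ of perfect complexes which I expect to be perfect, identifying $R \Lie \mathcal{M}$ with the derived dual of $R \Lie \mathcal{N}$; this is Illusie's co-Lie duality for Cartier-dual finite flat group schemes. Passing to determinants then gives $\det R \Lie \mathcal{N} \cong (\det R \Lie \mathcal{M})^{\tensor -1}$. Even here one must be careful with the determinant-of-a-complex formalism under dualization: the shift carried by the duality must land in an even degree so that the determinant genuinely inverts rather than being preserved. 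The construction is again local on $X$.

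The heart of the matter, and the step I expect to be the main obstacle, is \eqref{0017}. Since $\mathcal{N}$ is finite flat, the conductor divisor of Proposition \ref{0007} vanishes, so that proposition identifies the two vertical arrows of the diagram, while the horizontal arrows are the isomorphisms just constructed. Proving commutativity amounts to exhibiting the abelian-scheme duality of \eqref{0015} and the Cartier duality of \eqref{0016} as two manifestations of a single pairing. The cleanest route I foresee is to trace both through the Poincar\'e biextension on $\mathcal{A}_{1} \times \mathcal{B}_{1}$: its cohomological realization gives the cup-product and Serre-duality pairing behind \eqref{0015}, while its restriction to $\mathcal{N} \times \mathcal{M}$ recovers the Cartier pairing behind \eqref{0016}, so that functoriality of $R \Lie$ should force the square to commute. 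The genuinely delicate work is the bookkeeping of signs and shifts in the determinant formalism as one passes through the boundary map $R \Lie \mathcal{N} \to \Lie \mathcal{A}_{1} \to \Lie \mathcal{A}_{2}$ and dualizes, and verifying that these contributions cancel; this is where the argument will stand or fall. Finally, \eqref{0022} is formal, every ingredient above — the cohomology sheaves, the cup product, the Serre trace, the Cartier pairing and the determinant functor — being compatible with flat, hence Zariski, base change.
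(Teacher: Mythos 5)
Your treatment of \eqref{0015} and \eqref{0022} is fine (the relative form of \eqref{0013} via $\Lie \mathcal{B} \cong R^{1}\pi_{*}\Order_{\mathcal{A}}$, exterior powers and relative Serre duality is standard; the paper points to \cite[Lemma 1.1.3]{Lau96} and \cite[Lemma (13.9)]{EGM12} for exactly this). The fatal gap is in \eqref{0016}: the duality you invoke does not exist. The Cartier pairing $\mathcal{N} \times \mathcal{M} \to \Gm$ induces \emph{nothing} on Lie complexes --- a bilinear map vanishes to second order at the origin, so its derivative is zero --- and $R \Lie \mathcal{M}$ is \emph{not} the derived dual of $R \Lie \mathcal{N}$. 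Concretely, take $\mathcal{N} = \mu_{p}$ and $\mathcal{M} = \Z/p\Z$ over $X = \Spec \Z_{p}$ (or $\F_{p}$): then $R \Lie (\Z/p\Z) = 0$ because $\Z/p\Z$ is \'etale, while $R \Lie \mu_{p} \simeq [\Order \stackrel{p}{\to} \Order]$ (degrees $0,1$) is nonzero, so no pairing $R \Lie \mathcal{N} \tensor^{L} R \Lie \mathcal{M} \to \Order_{X}$ can be perfect. The relation between Cartier dual group schemes holds only at the level of determinant line bundles, not of complexes, so your proposed construction of \eqref{0016} collapses.

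Worse, the failure is structural rather than a fixable lemma: any construction of \eqref{0016} that (as yours would, if it existed) restricts to the tautological identification wherever $\mathcal{N}$ and $\mathcal{M}$ are \'etale (where both determinants are canonically trivial) makes \eqref{0017} \emph{false}, so the architecture ``build \eqref{0015} and \eqref{0016} naively, then verify \eqref{0017}'' cannot be completed. Indeed, writing $\phi_{i} \colon \det \Lie \mathcal{B}_{i} \isomto \det \Lie \mathcal{A}_{i}$ for the isomorphisms of \eqref{0015}, functoriality of Serre duality under the finite flat map $f \colon \mathcal{A}_{1} \to \mathcal{A}_{2}$ (pullback multiplies the trace pairing by $\deg f$) gives $\det \Lie f \compose \phi_{1} \compose \det \Lie f^{\vee} = \pm(\deg f)\, \phi_{2}$ --- test $f = [n]$, where both sides give $n^{2g}$. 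So the top row of \eqref{0017}, read through the generic trivializations determined by $f$ and $f^{\vee}$, carries a factor $\deg f$ (the order of $\mathcal{N}$), while the two vertical maps carry none. In the example of an isogeny of elliptic curves over $\Z_{p}$ with kernel $\mathcal{N} = \mu_{p}$, $\mathcal{M} = \Z/p\Z$: the left vertical isomorphism matches lattices differing by valuation $1$, the right one lattices differing by valuation $0$, so commutativity forces the bottom isomorphism \eqref{0016} to absorb a factor $p$; your version would be the identity there. The correct \eqref{0016} must therefore have the order of $\mathcal{N}$ built into its normalization --- for instance, one can define it Zariski-locally by embedding $\mathcal{N}$ into an abelian scheme (Raynaud), transporting along the dual exact sequence via \eqref{0015}, and proving independence of the embedding, which makes \eqref{0017} essentially true by construction; this is also what the different-ideal relation $\delta_{f}\delta_{f^{\vee}} = (\deg f)$ of \cite[Theorem 2.1.1]{Ray85} reflects. (For comparison: the paper gives no argument at all here; its proof is a citation to \cite[Proposition 9.1]{GRS24}, with the remark that the proof there works for general $X$.)
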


\begin{proof}
	This is \cite[Proposition 9.1]{GRS24}
	and its proof
	when $X$ is a smooth connected separated curve over a perfect field.
	The same proof works for general $X$.
\end{proof}

About \eqref{0015}, see also \cite[Lemma 1.1.3]{Lau96}
and \cite[Lemma (13.9)]{EGM12}.

Assume that $X$ is a proper smooth geometrically connected curve
over a perfect field $k$ of positive characteristic.
The degree $\deg \ideal{c}(f)$ of $\ideal{c}(f)$
with respect to the base field $k$
is called the \emph{conductor} of $f$ and denoted by $c(f)$
(\cite[Definition 5.1 (2)]{GRS24}).

\begin{proposition} \label{0008}
	We have
		\[
					\deg \Lie \mathcal{A}_{1}
				-
					\deg \Lie \mathcal{A}_{2}
			=
					\deg R \Lie \mathcal{N}
				-
					c(f).
		\]
\end{proposition}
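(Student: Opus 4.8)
The plan is to obtain the identity simply by applying the degree homomorphism to the canonical isomorphism furnished by Proposition \ref{0007}. Recall that for a line bundle $L$ on the proper smooth geometrically connected curve $X$ over $k$ the degree $\deg L \in \Z$ is defined, and that $L \mapsto \deg L$ factors through $\Pic(X)$ and is a group homomorphism $\Pic(X) \to \Z$; in particular it is additive on tensor products and sends a dual bundle to the negative of the degree. By convention, for a vector bundle or more generally a perfect complex $C$ on $X$ one writes $\deg C := \deg \det C$, so that $\deg \Lie \mathcal{A}_{i} = \deg \det \Lie \mathcal{A}_{i}$ and $\deg R \Lie \mathcal{N} = \deg \det R \Lie \mathcal{N}$.

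First I would take degrees of both sides of the isomorphism
\[
		\det \Lie \mathcal{A}_{1}
	\tensor
		(\det \Lie \mathcal{A}_{2})^{\tensor -1}
	\cong
		\det R \Lie \mathcal{N}
	\tensor
		\Order_{X}(- \ideal{c}(f))
\]
supplied by Proposition \ref{0007}. Additivity of $\deg$ on the left-hand side gives $\deg \Lie \mathcal{A}_{1} - \deg \Lie \mathcal{A}_{2}$, while on the right-hand side it gives $\deg R \Lie \mathcal{N} + \deg \Order_{X}(- \ideal{c}(f))$. The last term is computed directly from the definition of the conductor: since $\deg \Order_{X}(\ideal{c}(f)) = \deg \ideal{c}(f) = c(f)$, we have $\deg \Order_{X}(- \ideal{c}(f)) = - c(f)$. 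Combining these equalities yields the asserted identity.

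I do not expect any genuine obstacle here; the statement is an immediate corollary of Proposition \ref{0007}, and the only point requiring care is the bookkeeping of conventions. Specifically, one must keep straight that $\deg$ applied to a perfect complex means the degree of its determinant line bundle, and that the conductor $c(f)$ is by definition the degree of the effective divisor $\ideal{c}(f)$ with respect to $k$. Once these definitions are unwound, taking degrees in Proposition \ref{0007} finishes the proof in one line.
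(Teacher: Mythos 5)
Your proposal is correct and matches the paper's proof exactly: the paper derives Proposition \ref{0008} precisely by taking degrees in the isomorphism of Proposition \ref{0007}, using additivity of $\deg$ on tensor products and duals together with $\deg \Order_{X}(\ideal{c}(f)) = c(f)$. The bookkeeping of conventions you spell out (degree of a perfect complex means degree of its determinant, and $c(f) = \deg \ideal{c}(f)$) is exactly what the paper leaves implicit.
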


\begin{proof}
	This follows from Proposition \ref{0007}.
\end{proof}

For $i = 1, 2$, let $\alg{H}^{1}(X, \mathcal{A}_{i})$ be
the Tate--Shafarevich group scheme over $k$ defined in \cite{Suz20a}.
It is the perfection (inverse limit along Frobenius)
of a smooth group scheme with unipotent identity component
(\cite[Theorem 3.4.1 (1), (2)]{Suz20a}).
Define $\mu_{A_{i} / K} = \dim \alg{H}^{1}(X, \mathcal{A}_{i})$
as in \cite[(6.1)]{GRS24}.

\begin{proposition} \label{0009}
	We have
		\[
				\mu_{A_{2} / K} - \mu_{A_{1} / K}
			=
					\deg \Lie \mathcal{A}_{1}
				-
					\deg \Lie \mathcal{A}_{2}.
		\]
\end{proposition}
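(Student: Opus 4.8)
The plan is to transport the relation between determinants of Lie algebras (Proposition \ref{0008}) to the Tate--Shafarevich side by an Euler-characteristic computation, exploiting that the dimension of the group schemes $\alg{H}^{i}(X, -)$ is additive along exact sequences and is insensitive to the finitely generated pieces.

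First I would apply the cohomology functor $R\Gamma(X, -)$ of \cite{Suz20a} to the short exact sequence \eqref{0006}, namely $0 \to \mathcal{N} \to \mathcal{A}_{1} \to \mathcal{A}_{2}' \to 0$, obtaining a distinguished triangle and hence a long exact sequence of cohomology group schemes $\alg{H}^{i}(X, -)$ over $k$. Since each $\alg{H}^{i}$ is the perfection of a group scheme and dimension is additive on short exact sequences of such, the long exact sequence yields the Euler-characteristic identity
\[
	\sum_{i} (-1)^{i} \dim \alg{H}^{i}(X, \mathcal{A}_{1})
	- \sum_{i} (-1)^{i} \dim \alg{H}^{i}(X, \mathcal{A}_{2}')
	= \sum_{i} (-1)^{i} \dim \alg{H}^{i}(X, \mathcal{N}).
\]
For the abelian parts, $\alg{H}^{0}(X, \mathcal{A}_{i})$ is (the perfection of) the finitely generated Mordell--Weil group $A_{i}(K)$ and $\alg{H}^{2}(X, \mathcal{A}_{i})$ is likewise $0$-dimensional, so only the middle term survives and the left-hand side collapses to $\mu_{A_{2}'/K} - \mu_{A_{1}/K}$, where $\mu_{A_{2}'/K} := \dim \alg{H}^{1}(X, \mathcal{A}_{2}')$.

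It then remains to evaluate the finite contributions on the right and to pass from the fppf quotient $\mathcal{A}_{2}' = \mathcal{A}_{1}/\mathcal{N}$ to the N\'eron model $\mathcal{A}_{2}$. I would identify $\sum_{i} (-1)^{i} \dim \alg{H}^{i}(X, \mathcal{N})$ with $\deg R \Lie \mathcal{N}$ by an arithmetic Euler-characteristic (Grothendieck--Ogg--Shafarevich type) formula for the fppf cohomology of the quasi-finite flat group scheme $\mathcal{N}$ over $X$, whose unipotent/inseparable contribution is exactly the degree of the Lie complex. Separately, the canonical morphism $\mathcal{A}_{2}' \to \mathcal{A}_{2}$ is an isomorphism on generic fibers whose discrepancy on N\'eron models is measured by the conductor divisor $\ideal{c}(f)$; comparing the cohomology group schemes of $\mathcal{A}_{2}'$ and $\mathcal{A}_{2}$ then shifts $\mu$ by exactly $c(f) = \deg \ideal{c}(f)$, giving $\mu_{A_{2}/K} = \mu_{A_{2}'/K} - c(f)$. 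Putting these together yields $\mu_{A_{2}/K} - \mu_{A_{1}/K} = \deg R \Lie \mathcal{N} - c(f)$, which is precisely $\deg \Lie \mathcal{A}_{1} - \deg \Lie \mathcal{A}_{2}$ by Proposition \ref{0008}.

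The formal part --- the long exact sequence and additivity of dimension --- is routine; the real work, and the main obstacle, is the pair of finite computations: establishing the Euler-characteristic formula $\sum_{i}(-1)^{i}\dim\alg{H}^{i}(X,\mathcal{N}) = \deg R\Lie\mathcal{N}$ for the merely quasi-finite group scheme $\mathcal{N}$, and pinning down the conductor correction relating $\mathcal{A}_{2}'$ to $\mathcal{A}_{2}$, together with checking the $0$-dimensionality of $\alg{H}^{0}$ and $\alg{H}^{2}$ so that the Euler characteristic of the abelian parts reduces to $-\mu$. These are exactly the points where the duality and conductor machinery of \cite{Suz20a} and \cite{GRS24} is needed.
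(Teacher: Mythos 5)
Your proposal is logically sound and ends exactly where the paper does (an application of Proposition \ref{0008}), but the route you take to get there is an unwinding of the paper's single citation rather than the citation itself. The identity you set out to derive by cohomological bookkeeping, namely $\mu_{A_{2}/K} - \mu_{A_{1}/K} = \deg R \Lie \mathcal{N} - c(f)$, is verbatim \cite[Theorem 10.3]{GRS24}, and the paper's entire proof is: quote that theorem, then apply Proposition \ref{0008}. Your outline --- applying $R\Gamma(X,-)$ of \cite{Suz20a} to \eqref{0006}, using additivity of dimension along the long exact sequence, the $0$-dimensionality of $\alg{H}^{0}$ and $\alg{H}^{2}$ of the N\'eron models, an Euler-characteristic formula $\sum_{i}(-1)^{i}\dim\alg{H}^{i}(X,\mathcal{N}) = \deg R\Lie\mathcal{N}$, and the correction $\mu_{A_{2}/K} = \mu_{A_{2}'/K} - c(f)$ when passing from the fppf quotient to the N\'eron model --- is thus a plausible reconstruction of the \emph{proof of the cited theorem}, not an alternative to the paper's argument; your signs and bookkeeping are consistent with the cited statement, and the structural inputs about $\alg{H}^{0}$ and $\alg{H}^{1}$ are indeed in \cite[Theorem 3.4.1]{Suz20a}. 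The caveat is that the two steps you yourself flag as ``the real work'' (the Euler-characteristic formula for the merely quasi-finite flat $\mathcal{N}$, where finite-flat formulas do not directly apply, and the conductor correction for $\mathcal{A}_{2}' \to \mathcal{A}_{2}$) are left as black boxes pointing vaguely at the ``machinery of \cite{Suz20a} and \cite{GRS24}''; packaged together, that machinery \emph{is} \cite[Theorem 10.3]{GRS24}. So your proof is correct modulo that deferral, but as written it is not self-contained, and once one is willing to cite \cite{GRS24} at all, the efficient move is to cite Theorem 10.3 directly --- which is precisely what the paper does.
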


\begin{proof}
	We have
		\[
				\mu_{A_{2} / K} - \mu_{A_{1} / K}
			=
					\deg R \Lie \mathcal{N}
				-
					c(f)
		\]
	by \cite[Theorem 10.3]{GRS24}.
	Hence the result follows from
	Proposition \ref{0008}.
\end{proof}

\begin{theorem} \label{0014}
	Let $A$ and $B$ be abelian varieties over $K$
	dual to each other.
	Then $\deg \Lie \mathcal{A} = \deg \Lie \mathcal{B}$.
\end{theorem}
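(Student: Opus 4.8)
The plan is to reduce the equality of Lie-algebra degrees to the duality invariance of the $\mu$-invariants, which is exactly the duality result of \cite{Suz20a}; the bridge between the two is Proposition \ref{0009}. That proposition, however, is phrased for an isogeny $A_{1} \to A_{2}$, so the first step is to produce an isogeny relating $A$ to its dual $B$. Since $A$ is an abelian variety over the field $K$, it is projective and hence carries a polarization $\lambda \colon A \to B$ (recall that $B$ is the dual abelian variety of $A$), which is an isogeny over $K$. This is the only nonformal input on the geometric side.

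Next I would apply Proposition \ref{0009} to $\lambda$, taking $A_{1} = A$ and $A_{2} = B$, to obtain
\[
		\mu_{B / K} - \mu_{A / K}
	=
		\deg \Lie \mathcal{A} - \deg \Lie \mathcal{B}.
\]
Thus the theorem becomes equivalent to the single assertion $\mu_{A / K} = \mu_{B / K}$, that is, that the Tate--Shafarevich group schemes $\mathbf{H}^{1}(X, \mathcal{A})$ and $\mathbf{H}^{1}(X, \mathcal{B})$ have the same dimension over $k$.

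This last equality is precisely where \cite{Suz20a} enters. Because $B$ is dual to $A$, there is a perfect duality between $\mathbf{H}^{1}(X, \mathcal{A})$ and $\mathbf{H}^{1}(X, \mathcal{B})$ (the scheme-theoretic refinement of the Cassels--Tate pairing, valid over general perfect residue fields), and dual group schemes share the same dimension; hence $\mu_{A / K} = \mu_{B / K}$. Substituting this back into the displayed identity yields $\deg \Lie \mathcal{A} = \deg \Lie \mathcal{B}$, as claimed.

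I expect essentially all the content to reside in the cited duality $\mu_{A / K} = \mu_{B / K}$; everything else is formal. A pleasant feature of this route is that the choice of polarization is irrelevant, since the right-hand side $\deg \Lie \mathcal{A} - \deg \Lie \mathcal{B}$ does not depend on the isogeny $\lambda$. Consequently no compatibility between $\lambda$ and the duality pairing ever needs to be verified, and any polarization delivers the same conclusion.
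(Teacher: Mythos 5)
Your proposal is correct and takes essentially the same route as the paper: pick a polarization $\lambda \colon A \to B$ so that Proposition \ref{0009} applies with $A_{1} = A$, $A_{2} = B$, then deduce $\mu_{A/K} = \mu_{B/K}$ from the duality theorem of \cite{Suz20a}. The only difference is one of precision in the middle step: the paper states the duality not as a perfect pairing of the full group schemes but as a Breen--Serre duality between the identity components of $\alg{H}^{1}(X, \mathcal{A})$ and $\alg{H}^{1}(X, \mathcal{B})$ up to finite \'etale group schemes, with the dimension-invariance of that duality cited from \cite[Proposition 1.2.1]{Beg81} --- exactly the consequence your argument needs.
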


\begin{proof}
	By \cite[Theorem 3.4.1 (6g) and Proposition 2.2.5]{Suz20a},
	the identity components of the perfect group schemes
	$\alg{H}^{1}(X, \mathcal{A})$ and
	$\alg{H}^{1}(X, \mathcal{B})$ are Breen--Serre dual
	(\cite[Chapter III, Lemma 0.13 (c)]{Mil06},
	\cite[Proposition 1.2.1]{Beg81}) to each other
	up to finite \'etale group schemes.
	But Breen--Serre duality does not change the dimension
	by \cite[Proposition 1.2.1]{Beg81}.
	Hence $\mu_{A / K} = \mu_{B / K}$.
	The result now follows from
	Proposition \ref{0009}.
\end{proof}

As $\omega_{\mathcal{A}}$ is dual to $\det \Lie \mathcal{A}$
and the same is true for $\mathcal{B}$,
Theorem \ref{0014} proves Theorem \ref{0010}
in the function field case.
The number field case is known
as noted after Theorem \ref{0010}.

\begin{remark}
	The formula in Proposition \ref{0009} is Iwasawa-theoretic
	as \cite{GRS24} itself is about
	Iwasawa theory for abelian varieties over function fields.
	It has an Iwasawa-theoretic analogue
	for elliptic curves over $\Q$
	by Dokchitser--Dokchitser (\cite[Theorem 1.1]{DD15}).
	Note that their formula contains an additional term
	that is the difference of Chai's base change conductors
	of the elliptic curves at $p$.
	If one wants to give a proof of $h(A) = h(B)$
	for abelian varieties $A, B$ over number fields dual to each other
	in the style of this section,
	one will have to extend \cite[Theorem 1.1]{DD15}
	to all isogenous abelian varieties over general number fields
	(possibly using the base change conductors
	of $A$ and $B$ at places above $p$).
\end{remark}


\section{Hodge line bundles}

Let $K$ be a complete discrete valuation field
with ring of integers $\Order_{K}$
and perfect residue field $k$ of positive characteristic.
Let $A_{1} \to A_{2}$ be an isogeny of abelian varieties over $K$
with kernel $N$.
Let $B_{2} \to B_{1}$ be its dual isogeny
with kernel $M$.
Assume that $A_{i}$ and $B_{i}$ have semistable reduction.
We have a canonical perfect pairing
$N \times_{K} M \to \Gm$
of finite group schemes over $K$.
Let $\mathcal{A}_{1} \to \mathcal{A}_{2}$
and $\mathcal{B}_{2} \to \mathcal{B}_{1}$ be
the induced morphisms on the N\'eron models
with kernels $\mathcal{N}$ and $\mathcal{M}$, respectively.
Then $\mathcal{N}$ and $\mathcal{M}$ are
quasi-finite flat separated group schemes over $\Order_{K}$
by \cite[Section 7.3, Lemma 5]{BLR90}.

\begin{proposition} \label{0000}
	The pairing
	$N \times_{K} M \to \Gm$
	over $K$ canonically extends to a pairing
	$\mathcal{N} \times_{\Order_{K}} \mathcal{M} \to \Gm$
	over $\Order_{K}$.
\end{proposition}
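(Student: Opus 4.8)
The plan is to realize the generic pairing as coming from the Poincaré biextension and then to extend the biextension over $\Order_{K}$ using semistability. Writing $\mathcal{P}_{i}$ for the Poincaré biextension of $(A_{i}, A_{i}^{\ast})$ by $\Gm$ and using $B_{i} = A_{i}^{\ast}$, the isogeny $f \colon A_{1} \to A_{2}$ and its dual $f^{\ast} \colon B_{2} \to B_{1}$ yield two biextensions of $(A_{1}, B_{2})$ by $\Gm$, namely $(f \times \id)^{\ast} \mathcal{P}_{2}$ and $(\id \times f^{\ast})^{\ast} \mathcal{P}_{1}$, and the defining property of the dual isogeny furnishes a canonical isomorphism between them. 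Restricting to $N \times_{K} M$, the first is canonically trivial because $f|_{N} = 0$ (so $(f \times \id)$ factors through the zero section, along which any biextension is neutral), and the second is canonically trivial because $f^{\ast}|_{M} = 0$; the discrepancy between these two trivializations is a bihomomorphism $N \times_{K} M \to \Gm$, which is the pairing in question. The advantage of this formulation is that the pairing is visibly $\Gm$-valued, being built from $\Gm$-torsor trivializations rather than from a mere rational function.

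To extend over $\Order_{K}$ I would invoke semistability: since the $A_{i}$ and hence the $B_{i}$ have semistable reduction, Grothendieck's extension theory (see \cite{BLR90}) extends each $\mathcal{P}_{i}$ to a biextension of the Néron models by $\Gm$, compatibly with the dual isogeny, so that the canonical isomorphism $(f \times \id)^{\ast} \mathcal{P}_{2} \cong (\id \times f^{\ast})^{\ast} \mathcal{P}_{1}$ persists over $\Order_{K}$. The schematic closures $\mathcal{N} \into \mathcal{A}_{1}$ and $\mathcal{M} \into \mathcal{B}_{2}$ are annihilated by $\mathcal{A}_{1} \to \mathcal{A}_{2}$ and $\mathcal{B}_{2} \to \mathcal{B}_{1}$ respectively, so the same trivializations as above are available over $\mathcal{N} \times_{\Order_{K}} \mathcal{B}_{2}$ and $\mathcal{A}_{1} \times_{\Order_{K}} \mathcal{M}$; comparing them over $\mathcal{N} \times_{\Order_{K}} \mathcal{M}$ through the extended canonical isomorphism produces the desired $\Gm$-valued pairing $\mathcal{N} \times_{\Order_{K}} \mathcal{M} \to \Gm$.

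For canonicity I would argue by schematic density: the product $\mathcal{N} \times_{\Order_{K}} \mathcal{M}$ is flat over $\Order_{K}$, being a product of schematic closures, so its generic fiber $N \times_{K} M$ is schematically dense, and since $\Gm$ is separated over $\Order_{K}$ any two extensions of the given generic pairing coincide; thus existence forces uniqueness and the extension is canonical. The main obstacle is existence, that is, guaranteeing that the pairing remains everywhere $\Gm$-valued (invertible) on the special fiber rather than degenerating; this is exactly what the semistable extension of the Poincaré biextension and its compatibility with duality provide. The delicate point is to control the behavior over the components of $\mathcal{N}$ and $\mathcal{M}$ lying outside the identity components of the Néron models, where one must ensure that the zero-section trivializations and the comparison isomorphism extend over all of $\mathcal{N} \times_{\Order_{K}} \mathcal{M}$; this is where semistability is genuinely used.
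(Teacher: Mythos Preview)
Your overall strategy is the same as the paper's: both arguments realize the Cartier pairing as the discrepancy between the two trivializations of the pulled-back Poincar\'e biextension on $A_{1} \times B_{2}$, and both extend this over $\Order_{K}$ via the semistable extension of the Poincar\'e biextension. The paper phrases this in the derived category (a biextension of $(\mathcal{A}_{i}, \mathcal{B}_{i})$ by $G$ is the same thing as a morphism $\mathcal{A}_{i} \tensor^{L} \mathcal{B}_{i} \to G[1]$, and the ``mapping fiber'' $\mathcal{C}$ packages exactly your comparison of two trivializations), but the underlying geometry is identical.

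There is, however, a genuine gap in your extension step. You assert that Grothendieck's theory extends each $\mathcal{P}_{i}$ to a biextension of the \emph{full} N\'eron models $(\mathcal{A}_{i}, \mathcal{B}_{i})$ by $\Gm$. This is false in general: what Grothendieck proves (and what one finds in \cite{BLR90}) is that in the semistable case $\mathcal{P}_{i}$ extends to a $\Gm$-biextension of the \emph{identity components} $(\mathcal{A}_{i}^{0}, \mathcal{B}_{i}^{0})$. The obstruction to extending further to the full N\'eron models by $\Gm$ is precisely Grothendieck's pairing on the component groups, which is nonzero (indeed perfect) in the semistable case. Since $\mathcal{N}$ and $\mathcal{M}$ need not lie in $\mathcal{A}_{1}^{0}$ and $\mathcal{B}_{2}^{0}$, restricting to identity components does not suffice, and you correctly flag this as ``the delicate point'' but do not resolve it.

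The paper's fix is to replace the target $\Gm$ by its N\'eron lft model $\mathcal{G}_{m}$: by \cite[Chapter III, Lemma C.10]{Mil06} the Poincar\'e morphism does extend to $\mathcal{A}_{i} \tensor^{L} \mathcal{B}_{i} \to \mathcal{G}_{m}[1]$ over the full N\'eron models. Running your comparison-of-trivializations argument with this target produces a pairing $\mathcal{N} \times_{\Order_{K}} \mathcal{M} \to \mathcal{G}_{m}$. One then observes a posteriori that it factors through $\Gm$: the quotient $\mathcal{G}_{m} / \Gm$ is the constant sheaf $\Z$ supported on the closed point, and since the special fibers of $\mathcal{N}$ and $\mathcal{M}$ are finite (hence torsion) group schemes, any bihomomorphism from them to $\Z$ is zero. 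Inserting this correction makes your argument complete and essentially identical to the paper's.
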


\begin{proof}
	We work in the derived category
	of fppf sheaves of abelian groups over $\Order_{K}$.
	Let $\mathcal{G}_{m}$ be the N\'eron (lft) model of $\Gm$.
	Let $\mathcal{C}$ be the mapping fiber of the morphism
		\[
				\mathcal{A}_{1} \tensor^{L} \mathcal{B}_{2}
			\to
					\mathcal{A}_{1} \tensor^{L} \mathcal{B}_{1}
				\oplus
					\mathcal{A}_{2} \tensor^{L} \mathcal{B}_{2}.
		\]
	We have canonical extensions
	$\mathcal{A}_{i} \tensor^{L} \mathcal{B}_{i} \to \mathcal{G}_{m}[1]$
	of the Poincar\'e biextension morphisms
	for $i = 1, 2$
	by \cite[Chapter III, Lemma C.10]{Mil06}.
	The morphisms for $i = 1$ and $2$ are compatible in the sense that the composites
		\[
				\mathcal{A}_{1} \tensor^{L} \mathcal{B}_{2}
			\to
				\mathcal{A}_{1} \tensor^{L} \mathcal{B}_{1}
			\to
				\mathcal{G}_{m}[1]
		\]
	and
		\[
				\mathcal{A}_{1} \tensor^{L} \mathcal{B}_{2}
			\to
				\mathcal{A}_{2} \tensor^{L} \mathcal{B}_{2}
			\to
				\mathcal{G}_{m}[1]
		\]
	are equal.
	Since any pairing
	$\mathcal{A}_{1} \times_{\Order_{K}} \mathcal{B}_{2} \to \mathcal{G}_{m}$
	is zero,
	the morphisms
	$\mathcal{A}_{i} \tensor^{L} \mathcal{B}_{i} \to \mathcal{G}_{m}[1]$
	uniquely come from a morphism
	$\mathcal{C} \to \mathcal{G}_{m}$.
	Composing it with the natural morphism
	$\mathcal{N} \tensor^{L} \mathcal{M} \to \mathcal{C}$,
	we obtain a pairing
	$\mathcal{N} \times \mathcal{M} \to \mathcal{G}_{m}$.
	It factors through the subgroup $\Gm$ of the target.
\end{proof}

\begin{proposition} \label{0032}
	Assume that $\mathcal{N}$ and $\mathcal{M}$ are finite over $\Order_{K}$.
	Then the pairing
	$\mathcal{N} \times_{\Order_{K}} \mathcal{M} \to \Gm$
	in Proposition \ref{0000} is perfect.
\end{proposition}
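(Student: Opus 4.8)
The plan is to reformulate perfectness as an isomorphism of finite flat group schemes and then to reduce it to a statement on the special fibre, where the semistable structure becomes available. Write $\mathcal{M}^{D}$ for the Cartier dual of $\mathcal{M}$, which is again finite flat over $\Order_{K}$, and let $\phi \colon \mathcal{N} \to \mathcal{M}^{D}$ be the homomorphism induced by the pairing of Proposition \ref{0000}; perfectness of the pairing is by definition the assertion that $\phi$ is an isomorphism. Since $\mathcal{N}$ and $\mathcal{M}$ are finite flat over the discrete valuation ring $\Order_{K}$, the orders of their fibres are constant, and the generic perfect pairing $N \times_{K} M \to \Gm$ forces $\lvert N \rvert = \lvert M \rvert$; hence the coordinate rings of $\mathcal{N}$ and of $\mathcal{M}^{D}$ are finite free $\Order_{K}$-modules of one and the same rank $r$. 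Thus $\phi$ is an isomorphism if and only if the determinant of the comorphism $\phi^{\sharp} \colon \Order(\mathcal{M}^{D}) \to \Order(\mathcal{N})$ is a unit of $\Order_{K}$.

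First I would dispose of the generic fibre together with the order count. By hypothesis the pairing is perfect over $K$, so $\phi_{K}$ is an isomorphism and $\det \phi^{\sharp}$ is a nonzero element of $\Order_{K}$. Therefore $\det \phi^{\sharp}$ is a unit if and only if it is nonzero modulo the maximal ideal, that is, if and only if the special-fibre homomorphism $\phi_{k} \colon \mathcal{N}_{k} \to (\mathcal{M}_{k})^{D}$ is an isomorphism, where I have used that Cartier duality commutes with the base change $\Order_{K} \to k$. As $\mathcal{N}_{k}$ and $\mathcal{M}_{k}$ are finite group schemes over the field $k$ of the same order $r$, this is in turn equivalent to $\phi_{k}$ being a monomorphism, i.e. to the left kernel of the induced special-fibre pairing $\mathcal{N}_{k} \times_{k} \mathcal{M}_{k} \to \Gm$ being trivial. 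In this way the whole statement is reduced to the non-degeneracy of the pairing on the special fibre.

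It remains to treat the special fibre, and this is where the semistable hypothesis and the completeness of $K$ enter. The prime-to-$p$ part of $\mathcal{N}$ and $\mathcal{M}$ is finite \'etale over $\Order_{K}$, hence determined over the henselian ring $\Order_{K}$ by its generic fibre with its Galois action; the pairing is Galois-equivariant, so perfectness over $K$ specializes to perfectness over $k$. For the $p$-part I would argue that the special fibre of the pairing is the one induced by the special fibre of the extended Poincar\'e biextension used in Proposition \ref{0000}: in the semistable case the identity components $\mathcal{A}_{i,k}^{0}$ and $\mathcal{B}_{i,k}^{0}$ are semiabelian, and Grothendieck's biextension duality for semiabelian reduction (\cite[Chapter III, Section C]{Mil06}) identifies $\mathcal{B}_{i}$ with the derived dual $R\mathcal{H}om(\mathcal{A}_{i}, \mathcal{G}_{m}[1])$ up to the contributions of the component groups. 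Restricting this duality along the morphism $\mathcal{N} \tensor^{L} \mathcal{M} \to \mathcal{C}$ of Proposition \ref{0000}, and using that $R\mathcal{H}om(\mathcal{N}, \Gm) = \mathcal{N}^{D}$ is concentrated in degree $0$ for finite flat $\mathcal{N}$ (so that the derived and the Cartier duals agree), one should obtain that $\phi_{k}$ is an isomorphism. The hard part will be exactly this last step: the biextension duality on the \emph{full} N\'eron models is perfect only modulo the component groups, so the delicate point is to verify that, on the finite kernels $\mathcal{N}_{k}$ and $\mathcal{M}_{k}$, the component-group discrepancies cancel — this is what the semistability of the reduction and the finiteness of $\mathcal{N}$ and $\mathcal{M}$ are meant to guarantee — while keeping track of the degree shift so that the resulting dual indeed lands in degree $0$.
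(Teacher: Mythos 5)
Your reduction steps are fine but they are the easy part of the statement: since $\mathcal{N}$ and $\mathcal{M}^{D}$ are finite flat of the same rank over the local ring $\Order_{K}$ and $\phi_{K}$ is an isomorphism, checking that $\phi$ is an isomorphism on the special fibre is indeed sufficient, and the prime-to-$p$ part is handled correctly by \'etaleness over the henselian base. The genuine gap is exactly where you say ``one should obtain that $\phi_{k}$ is an isomorphism'' and then concede that the ``hard part'' is to see that the component-group discrepancies cancel: that step is the entire content of the proposition, and it is not a formality that can be waved through. The duality between N\'eron models of dual semiabelian varieties is \emph{not} perfect on the full models --- the failure is measured by Grothendieck's pairing on the component groups $\pi_{0}(\mathcal{A}_{k}) \times \pi_{0}(\mathcal{B}_{k})$, a nontrivial object whose own perfectness (in the semistable case, by Werner) is a theorem, not a definition --- so ``restricting the duality along $\mathcal{N} \tensor^{L} \mathcal{M} \to \mathcal{C}$'' does not by itself produce an isomorphism $\phi_{k}$; you would have to make precise how the finiteness of $\mathcal{N}$ and $\mathcal{M}$ interacts with the toric, abelian and component-group layers, and that is where the argument actually lives.

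The paper resolves precisely this point by a d\'evissage instead of a cancellation: using the Raynaud group schemes of the $A_{i}$ and $B_{i}$, one filters $\mathcal{N}'' \subset \mathcal{N}' \subset \mathcal{N}$ (intersection with the torus part, intersection with the identity component, everything) and similarly $\mathcal{M}'' \subset \mathcal{M}' \subset \mathcal{M}$, checks that the pairing makes $\mathcal{N}''$ and $\mathcal{M}'$ (resp.\ $\mathcal{N}'$ and $\mathcal{M}''$) orthogonal, and then proves perfectness of the three graded pairings separately: $\mathcal{N}'/\mathcal{N}'' \times \mathcal{M}'/\mathcal{M}''$ is the pairing of kernels of dual isogenies of abelian schemes (Milne's argument for abelian schemes applies); $\mathcal{N}'' \times \mathcal{M}/\mathcal{M}'$ and $\mathcal{N}/\mathcal{N}' \times \mathcal{M}''$ are pairings of a finite flat multiplicative group scheme against a finite \'etale one, and such a pairing is perfect over $\Order_{K}$ as soon as it is perfect over $K$ (this is where the finiteness hypothesis on $\mathcal{N}$, $\mathcal{M}$ --- i.e.\ finiteness of $\mathcal{M}/\mathcal{M}'$ and $\mathcal{N}/\mathcal{N}'$ --- enters). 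Perfectness of the total pairing then follows from the filtration. If you want to salvage your special-fibre strategy, you would still need this structural input on the kernels; the determinant/Nakayama reduction does not let you avoid it.
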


\begin{proof}
	For $i = 1, 2$,
	let $\mathcal{G}_{i}$ and $\mathcal{H}_{i}$ be
	the Raynaud group schemes for $A_{i}$ and $B_{i}$, respectively
	(\cite[Chapter III, Theorem C.15]{Mil06}).
	Let $\mathcal{T}_{i}$ and $\mathcal{S}_{i}$ be
	the torus parts of
	$\mathcal{G}_{i}^{0}$ and $\mathcal{H}_{i}^{0}$, respectively.
	Let $\mathcal{A}_{i}'$ and $\mathcal{B}_{i}'$ be
	the abelian scheme quotients of
	$\mathcal{G}_{i}^{0}$ and $\mathcal{H}_{i}^{0}$, respectively.
	Let $\mathcal{N}' =  \mathcal{N} \cap \mathcal{A}_{1}^{0}$ and
	$\mathcal{N}'' = \mathcal{N} \cap \mathcal{T}_{1}$.
	Let $\mathcal{M}'' \subset \mathcal{M}' \subset \mathcal{M}$ be similarly.
	Then $\mathcal{N}' / \mathcal{N}''$ and $\mathcal{M}' / \mathcal{M}''$
	are the kernels of the induced isogenies
	$\mathcal{A}_{1}' \onto \mathcal{A}_{2}'$
	and $\mathcal{B}_{2}' \onto \mathcal{B}_{1}'$.
	The restriction $\mathcal{N}' \times \mathcal{M}' \to \Gm$
	annihilates $\mathcal{N}''$ and $\mathcal{M}''$
	and the induced pairing
	$\mathcal{N}' / \mathcal{N}'' \times \mathcal{M}' / \mathcal{M}'' \to \Gm$
	is perfect as in the proof of \cite[Chapter III, Theorem C.15 (d)]{Mil06}.
	Consider the induced pairing
	$\mathcal{N}'' \times \mathcal{M} / \mathcal{M}' \to \Gm$.
	By assumption, $\mathcal{M} / \mathcal{M}'$ is finite.
	Hence this pairing is a paring between
	the finite flat multiplicative group scheme $\mathcal{N}''$
	and the finite \'etale group scheme $\mathcal{M} / \mathcal{M}'$.
	As it is a perfect pairing when base-changed to $K$
	as in the proof of \cite[Chapter III, Theorem C.15 (d)]{Mil06},
	it is perfect (over $\Order_{K}$).
	Similarly, the induced pairing
	$\mathcal{N} / \mathcal{N}' \times \mathcal{M}'' \to \Gm$
	is perfect.
	Hence $\mathcal{N} \times \mathcal{M} \to \Gm$ is perfect.
\end{proof}

\begin{proposition} \label{0001}
	Under the canonical isomorphism
	$\det R \Lie N \cong (\det R \Lie M)^{\tensor -1}$
	of $K$-vector spaces in Proposition \ref{0021} \eqref{0016},
	the $\Order_{K}$-lattices
	$\det R \Lie \mathcal{N}$ and $(\det R \Lie \mathcal{M})^{\tensor -1}$
	correspond to each other.
\end{proposition}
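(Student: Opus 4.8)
The plan is to reduce the integral comparison to Proposition \ref{0021} \eqref{0016} applied over $\Order_{K}$, via the semistable (Raynaud) filtration used in the proof of Proposition \ref{0032}. First I would observe that the Lie complex $R \Lie \mathcal{N}$ is computed from the cotangent complex along the zero section, hence depends only on the finite part $\mathcal{N}^{f}$ of the quasi-finite group scheme $\mathcal{N}$ (the maximal open and closed subscheme finite over $\Order_{K}$ containing the zero section). Thus $R \Lie \mathcal{N} = R \Lie \mathcal{N}^{f}$ over $\Order_{K}$, so $\det R \Lie \mathcal{N} = \det R \Lie \mathcal{N}^{f}$ as $\Order_{K}$-lattices inside the common $K$-line $\det R \Lie N = \det R \Lie \mathcal{N}_{K}$, and likewise for $\mathcal{M}$. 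The complementary part $\mathcal{N} / \mathcal{N}^{f}$ has empty special fiber and étale generic fiber and contributes nothing. This keeps the generic-fiber identification the canonical isomorphism of Proposition \ref{0021} \eqref{0016} for the Cartier-dual pair $N, M$, while reducing the integral statement to a comparison of $\det R \Lie \mathcal{N}^{f}$ with $(\det R \Lie \mathcal{M}^{f})^{\tensor -1}$.

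Next I would decompose $\mathcal{N}^{f}$ by the filtration from the proof of Proposition \ref{0032}: the multiplicative part $\mathcal{N}'' = \mathcal{N} \cap \mathcal{T}_{1}$, the abelian part $\mathcal{N}' / \mathcal{N}''$ (the kernel of the isogeny $\mathcal{A}_{1}' \onto \mathcal{A}_{2}'$ of abelian schemes), and the étale part $\mathcal{N}^{f} / \mathcal{N}'$. Since $R \Lie$ sends short exact sequences to distinguished triangles (as in Proposition \ref{0007}), taking determinants gives
\[
	\det R \Lie \mathcal{N}^{f}
	\cong
	\det R \Lie \mathcal{N}''
	\tensor
	\det R \Lie (\mathcal{N}' / \mathcal{N}'')
	\tensor
	\det R \Lie (\mathcal{N}^{f} / \mathcal{N}'),
\]
and the last factor is canonically $\Order_{K}$ because an étale group scheme has vanishing Lie complex. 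The same decomposition applies to $\mathcal{M}^{f}$, using the torus part $\mathcal{S}_{2}$ in place of $\mathcal{T}_{1}$.

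I would then match the graded pieces through the extended pairing of Proposition \ref{0000}. By the orthogonality established in the proof of Proposition \ref{0032}, this pairing identifies, over $\Order_{K}$, the multiplicative $\mathcal{N}''$ with the Cartier dual of the étale part of $\mathcal{M}$, the abelian $\mathcal{N}' / \mathcal{N}''$ with the Cartier dual of $\mathcal{M}' / \mathcal{M}''$ (these being the kernels of the dual isogenies $\mathcal{A}_{i}' \leftrightarrow \mathcal{B}_{i}'$ of dual abelian schemes), and the étale part of $\mathcal{N}$ with the Cartier dual of $\mathcal{M}''$. For each such Cartier-dual pair of finite flat group schemes over $\Order_{K}$, Proposition \ref{0021} \eqref{0016} supplies an isomorphism of $\Order_{K}$-line bundles between the corresponding determinants, the multiplicative-against-étale pairs contributing the trivial lattice on both sides. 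Tensoring these together yields an isomorphism of $\Order_{K}$-line bundles $\det R \Lie \mathcal{N}^{f} \isomto (\det R \Lie \mathcal{M}^{f})^{\tensor -1}$.

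The main obstacle is the final compatibility check: that this assembled integral isomorphism, after base change to $K$, coincides with the canonical generic-fiber isomorphism of Proposition \ref{0021} \eqref{0016} for $N, M$ — equivalently, that the latter is \emph{block diagonal} for the orthogonal filtration. I expect to deduce this from the functoriality of the construction in Proposition \ref{0021} with respect to short exact sequences of Cartier-dual pairs: Proposition \ref{0021} \eqref{0017} pins down the abelian block, while the multiplicativity of the cup-product duality over the filtration handles the multiplicative and étale blocks. Once the integral isomorphism is known to extend the canonical generic one, the two $\Order_{K}$-lattices necessarily coincide, which is the assertion.
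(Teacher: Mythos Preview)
Your approach diverges from the paper's and contains a genuine gap. The paper never tries to work with the finite parts $\mathcal{N}^{f}$, $\mathcal{M}^{f}$ directly. Instead it observes that, by semistability, the lattices $\det R\Lie\mathcal{N}$ and $\det R\Lie\mathcal{M}$ are stable under base change $(\var)\tensor_{\Order_{K}}\Order_{L}$ for any finite extension $L/K$, then passes to such an $L$ over which $\mathcal{N}$ and $\mathcal{M}$ become \emph{finite} (this is possible by \cite[Section 2.3, Lemma 6]{Ber03}). Over $\Order_{L}$ the pairing is perfect by Proposition~\ref{0032}, so Proposition~\ref{0021}~\eqref{0016},~\eqref{0022} applies directly. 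Three lines, no filtration.

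The gap in your argument is the claimed duality of graded pieces. When $\mathcal{N}$ and $\mathcal{M}$ are not finite, the finite parts $\mathcal{N}^{f}$ and $\mathcal{M}^{f}$ are \emph{not} Cartier dual, and the Raynaud filtrations on them do not match up under the pairing of Proposition~\ref{0000}. Concretely, take $A_{1}=A_{2}=E$ a Tate curve with parameter $q$, let $f=[\ell]$ for a prime $\ell\nmid v(q)$, and identify $E$ with its dual. Then $\mathcal{N}=\mathcal{M}=\mathcal{E}[\ell]$ is quasi-finite of generic degree $\ell^{2}$ but special degree $\ell$, so $\mathcal{N}^{f}$ and $\mathcal{M}^{f}$ are finite flat of order $\ell$, entirely contained in $\mathcal{E}^{0}$, and formally sit inside the torus part. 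In your filtration the multiplicative piece is all of $\mathcal{N}^{f}$ and the \'etale piece is zero, and the same holds for $\mathcal{M}^{f}$. You then want the multiplicative piece of $\mathcal{N}^{f}$ (order $\ell$) to be Cartier dual to the \'etale piece of $\mathcal{M}^{f}$ (order $1$), which is impossible. The orthogonality in the proof of Proposition~\ref{0032} genuinely needs the hypothesis that $\mathcal{N}$ and $\mathcal{M}$ are finite, so that $\mathcal{M}/\mathcal{M}'$ is a finite \'etale group scheme over $\Order_{K}$; in the quasi-finite case the ``\'etale part'' you would need is exactly the part you threw away in step~(1). Your observation $R\Lie\mathcal{N}=R\Lie\mathcal{N}^{f}$ is correct, but it does not carry the Cartier-dual structure from $(N,M)$ over to $(\mathcal{N}^{f},\mathcal{M}^{f})$.
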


\begin{proof}
	Since $A_{i}$ and $B_{i}$ are semistable,
	the $\Order_{K}$-lattices in question
	are stable under base change $(\var) \tensor_{\Order_{K}} \Order_{L}$
	for any finite extension $L / K$.
	After such an extension,
	we may assume that $\mathcal{N}$ and $\mathcal{M}$ are
	finite over $\Order_{K}$
	by \cite[Section 2.3, Lemma 6]{Ber03}
	and hence Cartier dual to each other by
	Proposition \ref{0032}.
	This case is Proposition \ref{0021} \eqref{0022}.
\end{proof}

\begin{proposition} \label{0002}
	Under the canonical isomorphisms
		\[
				\det \Lie A_{1} \tensor (\det \Lie B_{1})^{\tensor -1}
			\cong
				\det \Lie A_{2} \tensor (\det \Lie B_{2})^{\tensor -1}
			\cong
				K
		\]
	of $K$-vector spaces in Proposition \ref{0021} \eqref{0015},
	the $\Order_{K}$-lattices
		\[
			\det \Lie \mathcal{A}_{1} \tensor (\det \Lie \mathcal{B}_{1})^{\tensor -1}
		\]
	and
		\[
			\det \Lie \mathcal{A}_{2} \tensor (\det \Lie \mathcal{B}_{2})^{\tensor -1}
		\]
	correspond to each other.
\end{proposition}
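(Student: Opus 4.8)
The plan is to deduce Proposition \ref{0002} from Propositions \ref{0007} and \ref{0001}, the crux being that the conductor divisors of $f$ and of its dual vanish in the semistable case. Write $L_{i} = \det \Lie \mathcal{A}_{i} \tensor (\det \Lie \mathcal{B}_{i})^{\tensor -1}$ for the two $\Order_{K}$-lattices in the statement. A direct rearrangement gives
\[
		L_{1} \tensor L_{2}^{\tensor -1}
	\cong
		\bigl( \det \Lie \mathcal{A}_{1} \tensor (\det \Lie \mathcal{A}_{2})^{\tensor -1} \bigr)
		\tensor
		\bigl( \det \Lie \mathcal{B}_{2} \tensor (\det \Lie \mathcal{B}_{1})^{\tensor -1} \bigr).
\]
Applying Proposition \ref{0007} to $f \colon A_{1} \to A_{2}$ (kernel $\mathcal{N}$) and to the dual isogeny $g \colon B_{2} \to B_{1}$ (kernel $\mathcal{M}$) identifies the two factors, up to the twists $\Order_{K}(-\ideal{c}(f))$ and $\Order_{K}(-\ideal{c}(g))$, with $\det R \Lie \mathcal{N}$ and $\det R \Lie \mathcal{M}$. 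By Proposition \ref{0001} the product $\det R \Lie \mathcal{N} \tensor \det R \Lie \mathcal{M}$ is the trivial lattice $\Order_{K}$ under the canonical isomorphism of Proposition \ref{0021} \eqref{0016}. Thus everything reduces to proving $\ideal{c}(f) = \ideal{c}(g) = 0$, together with the compatibility of the various generic identifications.

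The key step, and the one I expect to be the main obstacle, is the vanishing $\ideal{c}(f) = 0$ (and likewise $\ideal{c}(g) = 0$). By construction $\ideal{c}(f)$ records the map $\mathcal{A}_{1} / \mathcal{N} \to \mathcal{A}_{2}$ on the determinants of the Lie algebras, so it suffices to show this map is an isomorphism on identity components. Since $A_{1}$ and $A_{2}$ are semistable, the identity components $\mathcal{A}_{1}^{0}$ and $\mathcal{A}_{2}^{0}$ are semiabelian schemes, and $f$ restricts to a homomorphism $\mathcal{A}_{1}^{0} \to \mathcal{A}_{2}^{0}$. Because the toric and abelian ranks are isogeny invariants of semistable reduction, this is an isogeny on both the generic and special fibers, hence finite flat and faithfully flat, with kernel the finite flat group scheme $\mathcal{N} \cap \mathcal{A}_{1}^{0}$; therefore the fppf quotient $\mathcal{A}_{1}^{0} / (\mathcal{N} \cap \mathcal{A}_{1}^{0})$ is canonically $\mathcal{A}_{2}^{0}$. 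As the formation of identity components is compatible with fppf quotients by quasi-finite flat subgroups, this yields $(\mathcal{A}_{1} / \mathcal{N})^{0} = \mathcal{A}_{1}^{0} / (\mathcal{N} \cap \mathcal{A}_{1}^{0}) = \mathcal{A}_{2}^{0}$, so $\mathcal{A}_{1} / \mathcal{N} \to \mathcal{A}_{2}$ is an isomorphism on identity components and hence on Lie algebras, giving $\ideal{c}(f) = 0$. The same argument applied to $g$ gives $\ideal{c}(g) = 0$. The delicate points are precisely that the restricted map is an isogeny \emph{onto} $\mathcal{A}_{2}^{0}$ and that its kernel is genuinely finite flat over $\Order_{K}$.

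It then remains to match the generic trivializations. By the previous two paragraphs, $L_{1} \tensor L_{2}^{\tensor -1}$ is isomorphic as an $\Order_{K}$-lattice to $\det R \Lie \mathcal{N} \tensor \det R \Lie \mathcal{M} \cong \Order_{K}$. To conclude $L_{1} = L_{2}$ I must verify that the generic identification obtained this way — the one from Proposition \ref{0007} over the generic point followed by Proposition \ref{0021} \eqref{0016} — agrees with the identification used in the statement, namely the one from Proposition \ref{0021} \eqref{0015}. This is exactly the commutativity of the diagram in Proposition \ref{0021} \eqref{0017}, applied over $\Spec K$: there $A_{1}, A_{2}, B_{1}, B_{2}$ are abelian schemes and $N, M$ are finite flat, so \eqref{0017} applies verbatim and shows the two generic identifications coincide. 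Combining this compatibility with the integral computation above gives $L_{1} = L_{2}$, as desired.
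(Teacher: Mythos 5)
Your strategy coincides with the paper's own proof, which disposes of Proposition \ref{0002} in one sentence as a ``direct translation of Proposition \ref{0001} by Proposition \ref{0021} \eqref{0017}'': that is exactly the content of your first and third paragraphs (rearrange the four determinant factors, use \eqref{0017} over $\Spec K$ to identify the generic trivialization in the statement with the one passing through $\det R \Lie N \cong (\det R \Lie M)^{\tensor -1}$, then invoke Proposition \ref{0001} for the lattices). What you make explicit, and what the paper's one-line proof leaves implicit, is the fact needed for this translation to respect the integral structures: the conductor divisors $\ideal{c}(f)$ and $\ideal{c}(g)$ vanish in the semistable case, equivalently $\mathcal{A}_{1}/\mathcal{N} \to \mathcal{A}_{2}$ and $\mathcal{B}_{2}/\mathcal{M} \to \mathcal{B}_{1}$ are isomorphisms on identity components. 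Isolating this as the crux is correct; it is a standard consequence of semiabelian reduction in the spirit of \cite[Section 7.3]{BLR90}, and your reduction of the proposition to it (via Proposition \ref{0007}) is sound.

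However, the claim you yourself flag as the delicate point --- that $\mathcal{N} \cap \mathcal{A}_{1}^{0}$ is ``genuinely finite flat over $\Order_{K}$'' and that $\mathcal{A}_{1}^{0} \to \mathcal{A}_{2}^{0}$ is finite --- is false in general. Take $K$ containing $q^{1/p}$ with $0 < |q| < 1$, let $A_{1} = \Gm/q^{\Z}$ and $A_{2} = \Gm/\genby{q^{1/p}}$ be Tate curves, and let $f$ be the natural degree-$p$ isogeny, whose kernel $\Z/p$ is generated by the class of $q^{1/p}$. Both curves have split multiplicative reduction, and under uniformization the induced map $\mathcal{A}_{1}^{0} \to \mathcal{A}_{2}^{0}$ is an isomorphism on special fibers; hence $\mathcal{N} \cap \mathcal{A}_{1}^{0}$ has generic fiber of order $p$ but trivial special fiber. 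Since it is flat (it is an open subgroup scheme of the quasi-finite flat kernel $\mathcal{N}$), finiteness would force constant fiber rank, so it is quasi-finite flat but not finite; correspondingly $\mathcal{A}_{1}^{0} \to \mathcal{A}_{2}^{0}$ has fiber degrees $p$ and $1$ and is not finite. Fortunately your argument never actually needs finiteness: surjectivity and flatness on both fibers give, via the fibral flatness criterion, that $\mathcal{A}_{1}^{0} \to \mathcal{A}_{2}^{0}$ is faithfully flat, hence an fppf epimorphism, hence the quotient by its kernel $\mathcal{N} \cap \mathcal{A}_{1}^{0}$ whether or not that kernel is finite; the identification $(\mathcal{A}_{1}/\mathcal{N})^{0} = \mathcal{A}_{1}^{0}/(\mathcal{N} \cap \mathcal{A}_{1}^{0}) \cong \mathcal{A}_{2}^{0}$ and the vanishing $\ideal{c}(f) = 0$ follow as you say. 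With ``finite flat'' weakened to ``quasi-finite flat'' throughout (and the finiteness of the map dropped), your proof is correct and fills in precisely the step the paper treats as implicit.
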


\begin{proof}
	This is a direct translation of
	Proposition \ref{0001}
	by Proposition \ref{0021} \eqref{0017}.
\end{proof}

\begin{proposition} \label{0003}
	Let $A$ and $B$ be any abelian varieties over $K$ dual to each other
	with N\'eron models $\mathcal{A}$ and $\mathcal{B}$, respectively.
	Under the canonical isomorphism
	$\det \Lie A \cong \det \Lie B$
	of $K$-vector spaces in Proposition \ref{0021} \eqref{0015},
	the $\Order_{K}$-lattices
	$\det \Lie \mathcal{A}$ and $\det \Lie \mathcal{B}$
	correspond to each other.
\end{proposition}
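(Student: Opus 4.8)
The plan is to prove the statement in two stages: first reduce the general case to the case of semistable reduction using the duality invariance of base change conductors from \cite{OS23}, and then settle the semistable case by the Raynaud uniformization, which reduces it to the abelian scheme (good reduction) case already recorded in Proposition \ref{0021} \eqref{0015}.

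First I would reduce to semistable reduction. Choose a finite Galois extension $L / K$ such that $A_{L}$ and $B_{L}$ acquire semistable reduction, and write $\mathcal{A}_{L}$, $\mathcal{B}_{L}$ for their N\'eron models over $\Order_{L}$. The isomorphism \eqref{0013} is assembled from deformation, cup product and Serre duality, all of which commute with the base extension $K \to L$ and are $\Gal(L / K)$-equivariant, so the comparison $\det \Lie A \cong \det \Lie B$ over $K$ base-changes to the corresponding one over $L$. The N\'eron mapping property yields an injective base change morphism $\det \Lie \mathcal{A} \tensor_{\Order_{K}} \Order_{L} \into \det \Lie \mathcal{A}_{L}$, and Chai's base change conductor records the length $e \cdot c(A)$ of its cokernel over $\Order_{L}$ (an integer, genuinely achieved since $A_{L}$ is semistable), where $e$ is the ramification index; thus, inside $\det \Lie A_{L}$, one has $\det \Lie \mathcal{A} \tensor \Order_{L} = \varpi_{L}^{e \cdot c(A)} \det \Lie \mathcal{A}_{L}$, and likewise for $B$. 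Granting the semistable case $\det \Lie \mathcal{A}_{L} = \det \Lie \mathcal{B}_{L}$ proved below and invoking the equality $c(A) = c(B)$ of \cite{OS23}, I would conclude $\det \Lie \mathcal{A} \tensor \Order_{L} = \det \Lie \mathcal{B} \tensor \Order_{L}$. Since $\det \Lie \mathcal{A}$ and $\det \Lie \mathcal{B}$ are lattices in the single $K$-line $\det \Lie A \cong \det \Lie B$, they differ by a power of $\varpi_{K}$; as that power survives but is killed by the faithfully flat base change to $\Order_{L}$, it must be trivial, so the general case follows from the semistable one.

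Next I would treat the semistable case via the Raynaud group scheme, exactly as in the proof of Proposition \ref{0032}. Let $\mathcal{G}$ and $\mathcal{H}$ be the Raynaud group schemes of $A$ and $B$, with torus parts $\mathcal{T}$, $\mathcal{S}$ and abelian scheme quotients $\mathcal{A}'$, $\mathcal{B}'$, so that $\mathcal{B}'$ is the dual abelian scheme of $\mathcal{A}'$ and $\mathcal{S}$ is the torus dual to $\mathcal{T}$. Because the uniformization is \'etale, $\Lie \mathcal{A} \cong \Lie \mathcal{G}^{0}$, and the exact sequence $0 \to \mathcal{T} \to \mathcal{G}^{0} \to \mathcal{A}' \to 0$ gives $\det \Lie \mathcal{A} \cong \det \Lie \mathcal{T} \tensor \det \Lie \mathcal{A}'$, and dually $\det \Lie \mathcal{B} \cong \det \Lie \mathcal{S} \tensor \det \Lie \mathcal{B}'$. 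On the abelian factor, Proposition \ref{0021} \eqref{0015} matches the lattices $\det \Lie \mathcal{A}'$ and $\det \Lie \mathcal{B}'$ integrally. On the toric factor, the canonical identifications $\Lie \mathcal{T} = X_{\ast}(\mathcal{T}) \tensor \Order_{K}$ and $\Lie \mathcal{S} = X_{\ast}(\mathcal{S}) \tensor \Order_{K} = X^{\ast}(\mathcal{T}) \tensor \Order_{K}$, together with the perfect pairing between characters and cocharacters, identify $\det \Lie \mathcal{T}$ with $(\det \Lie \mathcal{S})^{\tensor -1}$ over $\Order_{K}$. Tensoring the two matchings then gives the asserted correspondence of lattices.

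The hard part will be the final compatibility: verifying that the abstract matching just produced is the one induced by the canonical isomorphism \eqref{0013}, i.e. that \eqref{0013} respects the weight filtration coming from the uniformization, restricting to the character--cocharacter duality on the $\mathcal{T}$--$\mathcal{S}$ part and to the abelian scheme isomorphism of Proposition \ref{0021} \eqref{0015} on the $\mathcal{A}'$--$\mathcal{B}'$ part. This is precisely what the integral extension of the Poincar\'e biextension pairing $\mathcal{A} \tensor^{L} \mathcal{B} \to \mathcal{G}_{m}[1]$ in Proposition \ref{0000} and the naturality recorded in Proposition \ref{0021} \eqref{0017} are meant to control, and I would establish it by tracing that pairing through the filtration as in the proof of Proposition \ref{0032}, also using the compatibility with Zariski localization of Proposition \ref{0021} \eqref{0022}. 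Once this functoriality is in hand, the two-factor tensor decompositions of the semistable stage assemble to the claimed correspondence of $\Order_{K}$-lattices.
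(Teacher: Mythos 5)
Your first stage---reduction to the semistable case via a Galois extension $L/K$, the identification of the cokernel lengths with $e_{L/K}$ times Chai's base change conductors, and the input $c(A) = c(B)$ from \cite{OS23}---is exactly the paper's argument for the general case, so that part is fine. The gap is in your semistable stage, and it is genuine. First, the toric matching you assert does not exist: the torus part $\mathcal{S}$ of the Raynaud extension of $B$ is \emph{not} the dual torus of $\mathcal{T}$. By the duality theory of degenerating abelian varieties, $X^{\ast}(\mathcal{S})$ is canonically the period lattice $\Lambda$ of the rigid uniformization of $A$, and the canonical map $X^{\ast}(\mathcal{T}) \to \Hom(\Lambda, \Z) = X_{\ast}(\mathcal{S})$ is Grothendieck's monodromy pairing, which is injective with finite cokernel a N\'eron component group; for the Tate curve $\Gm / q^{\Z}$ it is multiplication by $\ord(q)$. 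So your identification $X_{\ast}(\mathcal{S}) = X^{\ast}(\mathcal{T})$ fails precisely when the component group is nontrivial. Second, even granting some toric matching, the bookkeeping has the wrong variance: combining $\det \Lie \mathcal{T} \cong (\det \Lie \mathcal{S})^{\tensor -1}$ with $\det \Lie \mathcal{A}' \cong \det \Lie \mathcal{B}'$ identifies $\det \Lie \mathcal{A}$ with $(\det \Lie \mathcal{S})^{\tensor -1} \tensor \det \Lie \mathcal{B}'$, which differs from $\det \Lie \mathcal{B} = \det \Lie \mathcal{S} \tensor \det \Lie \mathcal{B}'$ by $(\det \Lie \mathcal{S})^{\tensor 2}$. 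In any case, abstract isomorphisms of rank-one $\Order_{K}$-modules carry no content (any two are isomorphic); the entire assertion is that the lattices match under the specific isomorphism \eqref{0013}. That is your self-declared ``hard part,'' it is never proved, and the tools you cite cannot prove it as stated: Propositions \ref{0000}, \ref{0032} and \ref{0021} \eqref{0017} are all statements about the kernels of an isogeny and its dual, so they say nothing until an isogeny has been chosen---which your argument never does. In the totally degenerate case, your hard part amounts to a nontrivial integral Serre-duality/period computation on the rigid uniformization (the non-archimedean analogue of Proposition \ref{0005}), i.e.\ essentially the theorem itself.

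The paper's semistable argument sidesteps all of this with a trick you are missing. Choose any isogeny $A \to B$ (a polarization); its dual is again an isogeny $A \to B$. The isogeny machinery---Proposition \ref{0000} (extension of the canonical pairing of the kernels to the N\'eron models), Proposition \ref{0032} (Cartier duality of the extended kernels after a base change making them finite, which is where the Raynaud group scheme actually enters), and Propositions \ref{0001} and \ref{0002} (determinant bookkeeping via Proposition \ref{0021} \eqref{0017})---shows that the fractional ideals $\det \Lie \mathcal{A} \tensor (\det \Lie \mathcal{B})^{\tensor -1}$ and $\det \Lie \mathcal{B} \tensor (\det \Lie \mathcal{A})^{\tensor -1}$ of $K$ are equal. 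Writing the first as $\ideal{p}_{K}^{n}$, this forces $\ideal{p}_{K}^{n} = \ideal{p}_{K}^{-n}$, hence $n = 0$, which is exactly the claim. This antisymmetry argument never needs to trace \eqref{0013} through the toric--abelian filtration; if you want to keep your direct decomposition, supplying that filtration compatibility integrally is the real work you would have to do.
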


\begin{proof}
	First assume that $A$ and $B$ are semistable.
	Let $A \to B$ be any isogeny over $K$.
	Applying Proposition \ref{0002} to $A \to B$, we have
		\[
				\det \Lie \mathcal{A} \tensor (\det \Lie \mathcal{B})^{\tensor -1}
			=
				\det \Lie \mathcal{B} \tensor (\det \Lie \mathcal{A})^{\tensor -1}
			\subset
				K.
		\]
	Since the only fractional ideal $\ideal{p}_{K}^{n}$ (where $n \in \Z$)
	satisfying $\ideal{p}_{K}^{n} = \ideal{p}_{K}^{- n}$ in $K$
	is $\ideal{p}_{K}^{n} = \Order_{K}$ (so $n = 0$),
	we obtain the statement of the proposition in this case.
	
	For the general case,
	let $L$ be a finite Galois extension of $K$
	over which $A$ and $B$ have semistable reduction.
	Let $\mathcal{A}_{L}$ and $\mathcal{B}_{L}$ be
	the N\'eron models of $A \times_{K} L$ and $B \times_{K} L$,
	respectively.
	Then the semistable case implies that
	$\det \Lie \mathcal{A}_{L} \cong \det \Lie \mathcal{B}_{L}$
	over $\Order_{L}$ as lattices in
	$\det \Lie (A \times_{K} L) \cong \det \Lie (B \times_{K} L)$.
	We have natural inclusions
		\[
				(\det \Lie \mathcal{A}) \tensor_{\Order_{K}} \Order_{L}
			\into
				\det \Lie \mathcal{A}_{L},
		\]
		\[
				(\det \Lie \mathcal{B}) \tensor_{\Order_{K}} \Order_{L}
			\into
				\det \Lie \mathcal{B}_{L}
		\]
	of rank one free $\Order_{L}$-modules.
	The $\Order_{L}$-lengths of their cokernels are,
	by definition, $e_{L / K}$ times
	Chai's base change conductors (\cite[Section 1]{Cha00})
	of $A$ and $B$, respectively,
	where $e_{L / K}$ denotes the ramification index of $L / K$.
	But $A$ and $B$ have the same base change conductors
	by \cite[Theorem 1.2]{OS23}.
	Hence
		\[
				(\det \Lie \mathcal{A}) \tensor_{\Order_{K}} \Order_{L}
			\cong
				(\det \Lie \mathcal{B}) \tensor_{\Order_{K}} \Order_{L}
		\]
	in $\det \Lie (A \times_{K} L) \cong \det \Lie (B \times_{K} L)$.
	Therefore
	$\det \Lie \mathcal{A} \cong \det \Lie \mathcal{B}$
	in $\det \Lie A \cong \det \Lie B$, as desired.
\end{proof}

Let $X$ be an irreducible quasi-compact regular scheme of dimension $1$
with perfect residue field of positive characteristic at closed points.
Let $K$ be its function field.
Let $A$ and $B$ be abelian varieties over $K$
dual to each other.
Let $\mathcal{A}$ and $\mathcal{B}$ be their N\'eron models over $X$.

\begin{theorem} \label{0004}
	Under the canonical isomorphism
	$\det \Lie A \cong \det \Lie B$
	of $K$-vector spaces in Proposition \ref{0021} \eqref{0015},
	the line bundles
	$\det \Lie \mathcal{A}$ and $\det \Lie \mathcal{B}$
	correspond to each other.
\end{theorem}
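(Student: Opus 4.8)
The plan is to reduce this global statement on $X$ to the local statement already established in Proposition \ref{0003}, one closed point at a time. The content of the theorem is that two invertible $\Order_{X}$-submodules of the fixed one-dimensional $K$-vector space $\det \Lie A = \det \Lie B$ — namely $\det \Lie \mathcal{A}$ and $\det \Lie \mathcal{B}$, identified on the generic fibre through the canonical isomorphism of Proposition \ref{0021} \eqref{0015} — coincide. Two invertible subsheaves of a fixed generic fibre agree if and only if their stalks agree at every point, so since $X$ is regular of dimension one it suffices to compare the two lattices at each closed point $x$, where $\Order_{X,x}$ is a discrete valuation ring with perfect residue field of positive characteristic. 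Because the formation of Néron models and of their Lie algebras commutes with localization, the stalk of $\det \Lie \mathcal{A}$ at $x$ is the lattice $\det \Lie \mathcal{A}_{x}$ of the Néron model of $A$ over $\Order_{X,x}$, and similarly for $B$.

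Proposition \ref{0003} is stated over a \emph{complete} discrete valuation ring, so I would next pass to the completion $\widehat{\Order}_{X,x}$ with fraction field $\widehat{K}_{x}$. As $\Order_{X,x} \to \widehat{\Order}_{X,x}$ is faithfully flat, equality of the two lattices over $\Order_{X,x}$ is equivalent to equality after applying $(\var) \tensor_{\Order_{X,x}} \widehat{\Order}_{X,x}$. Using the standard compatibility of Néron models with completion of the base, so that $\mathcal{A}_{x} \times_{\Order_{X,x}} \widehat{\Order}_{X,x}$ is the Néron model of $A_{\widehat{K}_{x}}$ (see \cite[Section 7.2]{BLR90}), together with the fact that Lie algebras and determinants commute with base change, these completed lattices are precisely $\det \Lie \widehat{\mathcal{A}}$ and $\det \Lie \widehat{\mathcal{B}}$ for the Néron models over $\widehat{\Order}_{X,x}$. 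The field $\widehat{K}_{x}$ is a complete discrete valuation field with perfect residue field of positive characteristic, and $A_{\widehat{K}_{x}}$ and $B_{\widehat{K}_{x}}$ are again abelian varieties dual to each other, so Proposition \ref{0003} applies.

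What remains, and what I expect to be the only delicate point, is to confirm that the canonical isomorphism of Proposition \ref{0021} \eqref{0015} is compatible with base change along $K \to \widehat{K}_{x}$, so that the identification of generic fibres used at the completion is exactly the one produced directly for $A_{\widehat{K}_{x}}$ and $B_{\widehat{K}_{x}}$. I would verify this by inspecting the three isomorphisms composing \eqref{0013}: the deformation identification $\Lie B \cong H^{1}(A, \Order)$, the cup-product isomorphism, and Serre duality are each natural and commute with flat extension of the ground field, hence so does their composite. This is a routine naturality check rather than a substantive difficulty. Granting it, Proposition \ref{0003} yields $\det \Lie \widehat{\mathcal{A}} = \det \Lie \widehat{\mathcal{B}}$ at every closed point $x$, and therefore $\det \Lie \mathcal{A} = \det \Lie \mathcal{B}$ globally on $X$, as desired.
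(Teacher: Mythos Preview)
Your proposal is correct and follows essentially the same approach as the paper: the paper's proof is the single line ``This follows from Proposition \ref{0003},'' and you have simply unpacked the implicit reduction to the complete local case (localization at a closed point, passage to the completion via faithful flatness, and compatibility of N\'eron models and the canonical isomorphism with base change). The only point you flagged as possibly delicate---compatibility of the isomorphism in Proposition \ref{0021} \eqref{0015} with base change---is indeed routine, and the paper takes it for granted.
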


\begin{proof}
	This follows from Proposition \ref{0003}.
\end{proof}

This proves Theorem \ref{0011}.


\section{Complex periods}

Let $A$ be an abelian variety over $\C$
of dimension $g$.
Let $\omega_{A}$ be the dual of $\det \Lie A$.
It is equipped with a hermitian metric given by
	\begin{equation} \label{0018}
			||\omega||^{2}
		=
			C(g)
			\left|
				\int_{A(\C)} \omega \wedge \closure{\omega}
			\right|
	\end{equation}
for $\omega \in \omega_{A}$,
where $C(g) \in \R_{> 0}$ is a choice of some normalization constant
that depends only on the integer $g = \dim A$
and not on $\omega$.

\begin{proposition} \label{0005}
	Let $B$ be the dual of $A$.
	The canonical isomorphism $\omega_{A} \cong \omega_{B}$
	of $\C$-vector spaces in \eqref{0013}
	preserves the hermitian metrics.
\end{proposition}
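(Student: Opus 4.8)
The plan is to work entirely over $\C$ by uniformization, reducing the identity of metrics to a computation with periods and the Hodge structure on $H^{1}(A)$. Write $\Lambda = H_{1}(A,\Z)$, so that $A(\C) = (\Lie A)/\Lambda$ and $H_{2g}(A,\Z) = \det\Lambda$ with fundamental class $e_{\Lambda}$. Since $\omega\wedge\closure{\omega}$ is of top type $(g,g)$, the integral $\int_{A(\C)}\omega\wedge\closure{\omega}$ is the evaluation of this class on $e_{\Lambda}$. The first step is to record the resulting invariant formula: writing $\omega^{\vee}\in\det\Lie A$ for the dual of $\omega\in(\det\Lie A)^{\ast}$ and $e_{A}(\omega) = \omega^{\vee}\wedge\closure{\omega^{\vee}}$ for the associated generator of $\det H_{1}(A,\C) = \det\Lie A\tensor\det\closure{\Lie A}$, a direct period computation (using $dz\wedge d\bar z = -2i\,dx\wedge dy$) shows that $|\int_{A(\C)}\omega\wedge\closure{\omega}|$ equals the absolute value of the ratio $e_{\Lambda}/e_{A}(\omega)$ formed in the one-dimensional space $\det H_{1}(A,\C)$. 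Because the normalization $C(g)$ depends only on $g$, it is the same for $A$ and $B$ and cancels; it therefore suffices to prove $|\int_{A(\C)}\omega\wedge\closure{\omega}| = |\int_{B(\C)}\eta\wedge\closure{\eta}|$ whenever $\omega$ and $\eta$ correspond under \eqref{0013}.

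The second step is to make \eqref{0013} explicit in Hodge-theoretic terms. The deformation isomorphism identifies $\Lie B = H^{1}(A,\Order) = H^{0,1}(A)$ and $H_{1}(B,\Z) = H^{1}(A,\Z)$, the lattice $\Z$-dual to $\Lambda$; thus $H_{1}(B,\C) = H^{1}(A,\C)$ carries the Hodge decomposition of $H^{1}(A)$ with the two types interchanged. Under the identification $H^{0,q}(A) = \wedge^{q}H^{0,1}(A)$ the cup-product isomorphism $\det H^{1}(A,\Order)\cong H^{g}(A,\Order)$ is tautological, while the Serre-duality isomorphism $H^{g}(A,\Order)^{\ast}\cong\Gamma(A,\Omega^{g})$ is the integration pairing $\alpha\mapsto\int_{A(\C)}\omega\wedge\alpha$. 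Threading these together, an element $\eta\in\omega_{B} = (\det\Lie B)^{\ast}$ corresponds to $\omega\in\omega_{A}$ precisely when its dual vector $\eta^{\vee}\in\det\Lie B = \det H^{0,1}(A) = H^{0,g}(A)$ satisfies the normalization $\int_{A(\C)}\omega\wedge\eta^{\vee} = 1$.

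For the third step I apply the first-step formula to $B$: $|\int_{B(\C)}\eta\wedge\closure{\eta}|$ is the absolute value of $e_{\Lambda^{\vee}}/e_{B}(\eta)$ in $\det H_{1}(B,\C) = (\det H_{1}(A,\C))^{\ast}$, where $e_{\Lambda^{\vee}}$ generates $\det H_{1}(B,\Z)$ and $e_{B}(\eta) = \eta^{\vee}\wedge\closure{\eta^{\vee}}$. Pairing numerator and denominator with $e_{\Lambda}$ and using that the dual-lattice generators satisfy $\langle e_{\Lambda^{\vee}}, e_{\Lambda}\rangle = \pm 1$, this becomes $1/|\langle e_{B}(\eta), e_{\Lambda}\rangle|$; and via $\det H^{1}(A,\C)\cong H^{2g}(A,\C)$ and evaluation on $e_{\Lambda} = [A]$ one identifies $\langle e_{B}(\eta), e_{\Lambda}\rangle = \int_{A(\C)}\eta^{\vee}\wedge\closure{\eta^{\vee}}$. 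It remains to invoke one-dimensionality of $H^{g,0}(A)$: writing $\eta^{\vee} = t\,\closure{\omega}$, the normalization $t\int_{A(\C)}\omega\wedge\closure{\omega} = 1$ from the second step forces $|\int_{A(\C)}\eta^{\vee}\wedge\closure{\eta^{\vee}}| = |t|^{2}\,|\int_{A(\C)}\omega\wedge\closure{\omega}| = 1/|\int_{A(\C)}\omega\wedge\closure{\omega}|$. Substituting gives $|\int_{B(\C)}\eta\wedge\closure{\eta}| = |\int_{A(\C)}\omega\wedge\closure{\omega}|$, which is the assertion.

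I expect the main obstacle to be not any single estimate but the consistent bookkeeping of normalizations: one must transport both integrations onto $H^{\ast}(A)$ through \eqref{0013} with matching signs and with all factors of $2$, $i$ and $2\pi i$ accounted for identically on the two sides, checking in particular that the complex-torus uniformization of $B = \Pic^{0}(A)$ (its lattice $H^{1}(A,\Z)\hookrightarrow H^{1}(A,\C)\onto H^{0,1}(A)$) and the Serre trace used in \eqref{0013} are taken in one and the same coherent/Dolbeault convention, so that no stray transcendental factor survives. The conceptual payoff, once this is arranged, is that two a priori competing reciprocities cancel exactly: the covolumes of the dual lattices $H_{1}(A,\Z)$ and $H_{1}(B,\Z)$ are reciprocal, while Serre duality imposes the reciprocal normalization $\int_{A(\C)}\omega\wedge\eta^{\vee} = 1$; together these turn what could naively read as $||\eta||_{B} = ||\omega||_{A}^{-1}$ into the desired equality $||\eta||_{B} = ||\omega||_{A}$. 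A direct check in the case $g = 1$, where everything reduces to areas of fundamental parallelograms of a lattice and its dual, confirms that the factors do cancel.
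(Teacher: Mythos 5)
Your strategy is in essence the same as the paper's: uniformize, make the canonical isomorphism \eqref{0013} explicit, and let the reciprocal covolumes of the dual lattices cancel against the duality normalization. Your steps 1 and 3 are sound multilinear algebra: the ratio formula $|\int_{A(\C)}\omega\wedge\closure{\omega}| = |e_{\Lambda}/e_{A}(\omega)|$ is correct (the factor $2^{g}$ is absorbed into the ratio, as a $g=1$ check confirms), and the passage from the two inputs $\langle e_{\Lambda^{\vee}},e_{\Lambda}\rangle=\pm 1$ and $\int_{A(\C)}\omega\wedge\eta^{\vee}=1$ to the desired equality is fine. The genuine gap is in step 2, where those two inputs are asserted rather than proved --- and for this proposition that step is not bookkeeping around the proof; it \emph{is} the proof.

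Concretely, both normalization claims in step 2 are false as statements about the canonical maps. (i) Under the deformation isomorphism $\Lie B = H^{1}(A,\Order)=H^{0,1}(A)$, the lattice $H_{1}(B,\Z)=\Ker(\exp\colon\Lie B\to B(\C))$ is \emph{not} the image of $H^{1}(A,\Z)$ under the projection $H^{1}(A,\C)\onto H^{0,1}(A)$; by the exponential sequence $0\to 2\pi i\,\Z\to\Order\to\Order^{\times}\to 0$ it is the image of $2\pi i\, H^{1}(A,\Z)$. This Tate-twist factor is exactly what the paper extracts from \cite[Proposition 2.4.1]{BL04} in \eqref{0025}. (ii) The Serre duality entering \eqref{0013} is the algebraic one (the isomorphism \eqref{0013} must make sense over an arbitrary field), and in Dolbeault terms its trace is $\pm(2\pi i)^{-g}\int_{A(\C)}$, not $\int_{A(\C)}$; this is why the paper's final pairing carries the prefactor $1/(2\pi i)^{g}$. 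Each error rescales your characterization of \eqref{0013} by $(2\pi i)^{\pm g}$, and metric preservation is destroyed by rescaling the isomorphism, so the issue cannot be waved off: if you keep your trace convention but use the correct lattice statement (i), your own computation returns $|\int_{B(\C)}\eta\wedge\closure{\eta}| = (2\pi)^{2g}\,|\int_{A(\C)}\omega\wedge\closure{\omega}|$, i.e.\ the metrics would \emph{fail} to match. Your conclusion survives only because the two discrepancies cancel: your ``naive gauge'' (dual lattice pairing $\pm 1$ together with trace $=\int$) is the correct pair of statements transported through simultaneous multiplication by $(2\pi i)^{-1}$ on $H^{1}(A,\C)$, so the composite normalization $\int_{A(\C)}\omega\wedge\eta^{\vee}=1$ is, in the end, the right one. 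But this cancellation of the exponential's $2\pi i$ against the $(2\pi i)^{-g}$ in the algebraic trace is precisely the mathematical content of the proposition --- it is what the paper proves on the way to \eqref{0026} --- and you flag it yourself as ``the main obstacle'' without ever carrying it out; the $g=1$ check you invoke is performed inside your own conventions and so cannot detect the problem. To close the gap you must prove (i) and (ii) (or prove directly that your two naive conventions jointly compute the canonical \eqref{0013}), which is essentially the paper's argument.
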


\begin{proof}
	Take a uniformization exact sequence
		\begin{equation} \label{0023}
				0
			\to
				\Z^{2 g}
			\stackrel{M}{\to}
				\R^{2 g}
			=
				\C^{g}
			\to
				A(\C)
			\to
				0
		\end{equation}
	with $M \in \GL_{g}(\C) \backslash \GL_{2 g}(\R) / \GL_{2 g}(\Z)$.
	Let $(z_{n})_{n = 1}^{g} = (x_{n} + i y_{n})_{n = 1}^{g}$ be
	the coordinates of $\C^{g} = \R^{2 g}$.
	We have a non-zero element $d z_{1} \wedge \dots \wedge d z_{g}$ of $\omega_{A}$,
	with
		\[
				||d z_{1} \wedge \dots \wedge d z_{g}||^{2}
			=
				C(g) 2^{g} |\det M|.
		\]
	We also have an exact sequence
		\begin{equation} \label{0024}
				0
			\to
				\Z^{2 g}
			\stackrel{(M^{\mathrm{T}})^{-1}}{\longrightarrow}
				\R^{2 g}
			=
				\C^{g}
			\to
				B(\C)
			\to
				0,
		\end{equation}
	where $M^{\mathrm{T}}$ is the transpose of $M$.
	Let $(z_{n}^{\ast})_{n = 1}^{g}$ be the coordinates
	of $\C^{g}$ in this sequence.
	We have a non-zero element
	$d z_{1}^{\ast} \wedge \dots \wedge d z_{g}^{\ast}$
	of $\omega_{B}$, with
		\[
				||d z_{1}^{\ast} \wedge \dots \wedge d z_{g}^{\ast}||^{2}
			=
				C(g) 2^{g} |\det M|^{-1}.
		\]
	
	Hence it is enough to show that
	the isomorphism $\omega_{A} \cong \omega_{B}$ gives the correspondence
		\begin{equation} \label{0026}
				d z_{1} \wedge \dots \wedge d z_{g}
			\leftrightarrow
				(-1)^{g (g + 1) / 2} (\det M)
				d z_{1}^{\ast} \wedge \dots \wedge d z_{g}^{\ast}.
		\end{equation}
	(The sign is not important here.)
	For clarity, we use the symbols
	$\Lambda = \Z^{2 g}$ and $V = \C^{g}$
	for the uniformization data for $A$ (not $B$).
	By \cite[Proposition 2.4.1]{BL04},
	the isomorphism $B(\C) \cong \Pic^{0}(A)$
	is given by the map on the group cohomology groups
		\begin{equation} \label{0025}
				\exp(2 \pi i \var)
			\colon
				H^{1}(\Lambda, \R / \Z)
			\stackrel{\sim}{\to}
				H^{1}(\Lambda, \Gamma(V, \Order^{\times})),
		\end{equation}
	where $\Order$ on the right is the holomorphic structure sheaf.
	Hence the isomorphism
	$\Lie B \cong H^{1}(A, \Order)$ can be written as
		\[
				2 \pi i
			\colon
				H^{1}(\Lambda, \R)
			\isomto
				H^{1}(\Lambda, \Gamma(V, \Order)).
		\]
	Hence the basis of $\Lie B$ dual to $(d z_{1}^{\ast}, \dots, d z_{g}^{\ast})$
	corresponds to the anti-holomorphic forms
	$(\pi d \closure{z_{1}}, \dots, \pi d \closure{z_{g}})$ in $H^{1}(A, \Order)$.
	Therefore the element of $\det \Lie B$ dual to
	$d z_{1}^{\ast} \wedge \dots \wedge d z_{g}^{\ast}$
	corresponds to the element
	$\pi^{g} d \closure{z_{1}} \wedge \dots \wedge d \closure{z_{g}}$
	of $H^{g}(A, \Order)$.
	Under the Serre duality
	$H^{g}(A, \Order) \leftrightarrow \Gamma(A, \Omega^{g}) = \omega_{A}$,
	the pairing between
	$\pi^{g} d \closure{z_{1}} \wedge \dots \wedge d \closure{z_{g}}$
	and $d z_{1} \wedge \dots \wedge d z_{g}$
	is given by
		\[
				\frac{1}{(2 \pi i)^{g}}
				\int_{A(\C)}
						d z_{1} \wedge \dots \wedge d z_{g}
					\wedge
						\pi^{g} d \closure{z_{1}} \wedge \dots \wedge d \closure{z_{g}}
			=
				(-1)^{g (g + 1) / 2}
				\det M,
		\]
	as desired.
\end{proof}

\begin{theorem} \label{0012}
	Let $A$ and $B$ be abelian varieties over a number field $K$
	dual to each other.
	Let $\omega_{\mathcal{A}}$ and $\omega_{\mathcal{B}}$ be
	the Hodge line bundles for $A$ and $B$, respectively.
	The isomorphism
	$\omega_{\mathcal{A}} \cong \omega_{\mathcal{B}}$
	in Theorem \ref{0011}
	preserves the Faltings metrized bundle structures.
\end{theorem}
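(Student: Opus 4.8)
The plan is to reduce the statement to the complex-analytic computation of Proposition \ref{0005}, applied one embedding at a time. By definition, the Faltings metrized bundle structure on $\omega_{\mathcal{A}}$ consists of the $\Order_{K}$-lattice $\omega_{\mathcal{A}}$ together with, for each embedding $\sigma \colon K \into \C$, the hermitian metric \eqref{0018} on the complex fiber $\omega_{A \times_{K, \sigma} \C}$, with Faltings's normalization constant $C(g)$; here all infinite places, real and complex alike, are treated through the complex points $A(\C)$. Since Theorem \ref{0011} already identifies the two $\Order_{K}$-lattices $\omega_{\mathcal{A}}$ and $\omega_{\mathcal{B}}$ under the isomorphism \eqref{0013}, it remains only to check that, for each such $\sigma$, the base change of \eqref{0013} preserves the metric \eqref{0018}.

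To this end I would first record two base-change compatibilities. On the one hand, formation of the dual abelian variety commutes with the base change $\sigma \colon K \into \C$, so that $B \times_{K, \sigma} \C$ is canonically the dual of $A \times_{K, \sigma} \C$. On the other hand, each of the three arrows in \eqref{0013}---the deformation-theoretic identification $(\det \Lie B)^{\ast} \cong (\det H^{1}(A, \Order))^{\ast}$, the cup-product isomorphism, and Serre duality---commutes with the flat base change $K \to \C$, so that the base change of \eqref{0013} along $\sigma$ agrees with the canonical isomorphism $\omega_{A_{\C}} \cong \omega_{B_{\C}}$ of complex vector spaces used in Proposition \ref{0005}. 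Granting these, the assertion at $\sigma$ is exactly the conclusion of Proposition \ref{0005} for the complex abelian variety $A \times_{K, \sigma} \C$ and its dual. Note that Proposition \ref{0005} holds for any constant $C(g)$ depending only on $g = \dim A = \dim B$, so Faltings's particular normalization need not be computed; and because the Faltings structure integrates over $A(\C)$ even at real places, no special treatment of real places is required here (that subtlety enters only for the BSD structure of Theorem \ref{0031}). Assembling these place-by-place metric compatibilities with the integral identification of Theorem \ref{0011} gives the preservation of the full Faltings metrized bundle structure, whence $h(A) = h(B)$.

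The main obstacle is the base-change compatibility of the canonical isomorphism \eqref{0013}; the rest is a direct appeal to Proposition \ref{0005} and Theorem \ref{0011}. I expect this compatibility to be routine---each ingredient is a standard functor on coherent cohomology or a deformation-theoretic construction that respects flat base change---but it is worth stating explicitly, since the content of Theorem \ref{0011} is precisely that the generic-fiber isomorphism is the specific canonical one \eqref{0013}, and the metric comparison is meaningful only relative to that identification.
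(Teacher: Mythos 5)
Your proposal is correct and follows essentially the same route as the paper: the paper's proof of Theorem \ref{0012} is simply that it follows from Proposition \ref{0005}, applied at each infinite place, with Theorem \ref{0011} handling the integral structure. The base-change compatibilities you spell out (of \eqref{0013} and of dualization along $\sigma \colon K \into \C$) are left implicit in the paper, so your write-up is just a more explicit version of the same argument.
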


\begin{proof}
	This follows from Proposition \ref{0005}.
\end{proof}

This proves the part of Theorem \ref{0029}
for the Faltings metrized bundle structures.


\section{Real periods}

Let $A$ be an abelian variety over $\R$.
Then $\omega_{A}$ can also be equipped with a Riemannian metric
given by
	\begin{equation} \label{0019}
			||\omega||
		=
			\int_{A(\R)} |\omega|
	\end{equation}
for $\omega \in \omega_{A}$.

\begin{proposition} \label{0020}
	Let $B$ be dual to $A$.
	Then the isomorphism $\omega_{A} \cong \omega_{B}$
	preserves this metrics.
\end{proposition}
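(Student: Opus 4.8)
The plan is to prove the sharper metric statement by reducing, exactly as in the proof of Proposition \ref{0005}, to the analytic uniformization, and then tracking the real structure (complex conjugation) on both $A$ and $B$ with care. First I would fix a uniformization $A(\C) = \C^{g}/\Lambda$ in which the real structure is standard complex conjugation; after a choice of coordinates this means $\Lambda = M\Z^{2g}$ with $\closure{\Lambda} = \Lambda$. Then $A(\R)$ is the fixed locus of conjugation, a compact real Lie group of dimension $g$ (possibly disconnected) whose identity component runs along $V^{+} := \R^{g}\subset\C^{g}$. The real generator $\nu_{A} = dz_{1}\wedge\dots\wedge dz_{g}$ of $\omega_{A}$ restricts on each component to the standard volume form of $V^{+}$, so that
\[
	||\nu_{A}|| = \int_{A(\R)}|\nu_{A}| = |\pi_{0}(A(\R))|\cdot\operatorname{covol}_{V^{+}}(\Lambda\cap V^{+}).
\]

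The subtle point is the real structure on $B$. Since the Poincar\'e pairing is valued in $\Gm$, i.e.\ in $\Z(1)$, on which conjugation acts by $-1$, the conjugation on $H_{1}(B)\cong H_{1}(A)^{\ast}\tensor\Z(1)$ is $-\gamma^{\mathrm T}$, where $\gamma$ denotes the conjugation on $H_{1}(A) = \Z^{2g}$. Concretely, on $B(\C) = \C^{g}/\Lambda^{\vee}$ with $\Lambda^{\vee} = (M^{\mathrm T})^{-1}\Z^{2g}$ the real structure is $w\mapsto -\closure{w}$, so $B(\R)$ runs along the imaginary directions $V^{-} := i\R^{g}$ and the real generator of $\omega_{B}$ is $i^{g}\,dz^{\ast}_{1}\wedge\dots\wedge dz^{\ast}_{g}$. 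Hence
\[
	||i^{g}\,dz^{\ast}_{1}\wedge\dots\wedge dz^{\ast}_{g}|| = |\pi_{0}(B(\R))|\cdot\operatorname{covol}_{V^{-}}(\Lambda^{\vee}\cap V^{-}).
\]
By \eqref{0026} the canonical isomorphism sends $\nu_{A}$ to $\pm(\det M)\,dz^{\ast}_{1}\wedge\dots = \pm(\det M)i^{-g}\cdot(i^{g}dz^{\ast}_{1}\wedge\dots)$, a coefficient of absolute value $|\det M|$ against the real generator of $\omega_{B}$; so the proposition is equivalent to the period identity
\[
	|\pi_{0}(A(\R))|\operatorname{covol}_{V^{+}}(\Lambda\cap V^{+}) = |\det M|\cdot|\pi_{0}(B(\R))|\operatorname{covol}_{V^{-}}(\Lambda^{\vee}\cap V^{-}).
\]

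To finish I would invoke two purely lattice-theoretic facts, both consequences of Euclidean lattice duality for the orthogonal splitting $\C^{g} = V^{+}\oplus V^{-}$, noting that $\Lambda^{\vee}$ is exactly the dual lattice $\Lambda^{\ast}$. First, from $\operatorname{covol}(\Lambda) = \operatorname{covol}_{V^{+}}(\Lambda\cap V^{+})\cdot\operatorname{covol}_{V^{-}}(p_{V^{-}}\Lambda)$ (orthogonal projection) together with the duality $p_{V^{-}}\Lambda = (\Lambda^{\ast}\cap V^{-})^{\ast}$ inside $V^{-}$, one gets $\operatorname{covol}_{V^{+}}(\Lambda\cap V^{+}) = |\det M|\cdot\operatorname{covol}_{V^{-}}(\Lambda^{\vee}\cap V^{-})$. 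Second, writing $|\pi_{0}(A(\R))| = 2^{g}/|G_{\Lambda}|$ with glue group $G_{\Lambda} = \Lambda/((\Lambda\cap V^{+})\oplus(\Lambda\cap V^{-}))$, and likewise for $B$ with $\Lambda^{\vee}$, the same duality applied in both directions yields $|G_{\Lambda}| = |G_{\Lambda^{\vee}}|$, whence $|\pi_{0}(A(\R))| = |\pi_{0}(B(\R))|$. Substituting these two facts collapses both sides of the period identity.

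The hard part will be the correct determination of the real structure on $B$: the Tate twist forces conjugation to be $w\mapsto -\closure{w}$ and the real generator of $\omega_{B}$ to acquire the factor $i^{g}$, which is precisely why $B(\R)$ is integrated along the imaginary directions $V^{-}$ rather than along $V^{+}$. This is the \emph{special care of complex conjugations over $\R$}, and it is exactly what makes the component counts and the eigenlattice covolumes match through duality; over $\C$ no such twist is visible and the complex metric comparison of Proposition \ref{0005} sees only $|\det M|$.
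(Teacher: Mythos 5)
Your argument is correct, and its skeleton matches the paper's: both reduce via \eqref{0026} (from Proposition \ref{0005}) to a comparison of real periods computed through the uniformization, and both rest on the same key observation about the real structure of the dual side --- your Tate-twist computation that conjugation on $H_{1}(B) \cong H_{1}(A)^{\ast} \tensor \Z(1)$ is $-\gamma^{\mathrm{T}}$ is exactly the paper's statement, deduced from \eqref{0025}, that conjugation acts by $-c$ on $\Hom_{\Z}(\Lambda,\Z)$ and $\Hom_{\R}(V,\R)$. Where you genuinely diverge is the lattice bookkeeping. The paper fixes special bases (a $\C$-basis of $V$ inside $\Lambda^{c=1}$, a $\Z$-basis of $\Lambda^{c=-1}$ written $P+iQ$), computes $\int_{B(\R)^{0}}$ by comparison with an auxiliary torus $T$ through a diagram chase, and extracts both the covolume relation $\det Q = 2^{g}\det M / \#\pi_{0}(A(\R))$ and the equality of component groups from the index formula \eqref{0030}, applied once to $c$ and once to $-c$. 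You instead use invariant Euclidean lattice duality: the factorization of $\mathrm{covol}(\Lambda)$ along the orthogonal splitting $V^{+}\oplus V^{-}$, the dual-lattice identity $p_{V^{-}}\Lambda = (\Lambda^{\ast}\cap V^{-})^{\ast}$, and the glue-group duality $\# G_{\Lambda} = \# G_{\Lambda^{\vee}}$ (which holds because $G_{\Lambda} \cong p_{V^{+}}\Lambda / (\Lambda\cap V^{+})$ and passing to dual lattices reverses indices). Your route is basis-free and cleanly separates the two inputs ($|\det M|$ from covolume duality, $\#\pi_{0}(A(\R)) = \#\pi_{0}(B(\R))$ from glue duality); the paper's route is more computational but keeps the one non-elementary input --- the identification $\pi_{0}(A(\R)) \cong \hat{H}^{1}(\genby{c},\Lambda)$ from Milne --- visible in a single place.

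Two points you should make explicit. First, your formula $\#\pi_{0}(A(\R)) = 2^{g}/\# G_{\Lambda}$ is not a tautology: it is precisely \eqref{0030}, which the paper proves by the exact sequence $0 \to \Lambda/(\Lambda^{c=1}\oplus\Lambda^{c=-1}) \to \Lambda^{c=-1}/2\Lambda^{c=-1} \to \Lambda^{c=-1}/(c-1)\Lambda \to 0$ together with the Tate-cohomology identification above; cite it or reprove it, since your whole second lattice fact feeds through it. Second, the discreteness of $p_{V^{-}}\Lambda$ (needed for the covolume factorization) holds because $\Lambda\cap V^{+}$ spans $V^{+}$, which is true for an abelian variety over $\R$ but deserves a word. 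Finally, a remark in your favor: recording only the absolute value $|\det M|$ of the coefficient against the real generator $i^{g}\,dz^{\ast}_{1}\wedge\dots\wedge dz^{\ast}_{g}$ is the robust formulation, since the literal constant in \eqref{0026} depends on the normalization of the identification $\R^{2g}=\C^{g}$ on the dual side (reality of the canonical isomorphism forces a hidden factor $i^{g}$ there when $g$ is odd), a subtlety the paper brushes aside with ``the sign is not important.''
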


\begin{proof}
	Write $A(\C) = V / \Lambda$,
	where $V$ is a $g$-dimensional $\C$-vector space
	and $\Lambda$ a rank $2 g$ lattice.
	Let $c \colon A(\C) \isomto A(\C)$ be the complex conjugation
	acting on the coefficient field $\C$,
	which induces an automorphism on $\Lambda$
	and a $\C$-semi-linear automorphism on $V$.
	Let $\Lambda^{c = 1}$ and $\Lambda_{c = 1}$ be
	the kernel and cokernel, respectively,
	of the endomorphism $c - 1$ on $\Lambda$.
	Let $\Lambda^{c = - 1}$ and $\Lambda_{c = - 1}$ be
	similarly of $c + 1$ on $\Lambda$.
	Let $V^{c = \pm 1}$ and $V_{c = \pm 1}$ be similarly for $V$.
	Let $(z_{1}, \dots, z_{g})$, $(w_{1}, \dots, w_{g})$
	and $(\lambda_{1}, \dots, \lambda_{2 g})$ be
	$\Z$-bases of $\Lambda^{c = 1}$, $\Lambda^{c = -1}$
	and $\Lambda$ respectively.
	Take $(\lambda_{1}, \dots, \lambda_{2 g})$
	and $(z_{1}, \dots, z_{g})$ to be
	the $\Z$-basis and the $\C$-basis, respectively,
	of $\Lambda$ and $V$, respectively.
	We consider the exact sequences \eqref{0023} and \eqref{0024}
	with respect to these bases.
	Write
		\[
				(w_{1}, \dots, w_{g})
			=
				P + i Q
		\]
	in $V^{g}$ with $P \in M_{g}(\R)$ and $Q \in \GL_{g}(\R)$.
	The action of the complex conjugation $c$ on $B(\C)$
	induces actions on the terms $\Z^{2 g}$ and $\R^{2 g}$ in \eqref{0024}.
	By \eqref{0025},
	these actions are given by
	$- c$ on $\Z^{2 g} = \Hom_{\Z}(\Lambda, \Z)$
	and $\R^{2 g} = \Hom_{\R}(V, \R)$.
	Let $(w_{1}^{\ast}, \dots, w_{n}^{\ast})$ be
	the basis of $\Hom_{\Z}(\Lambda^{c = - 1}, \Z)$
	dual to $(w_{1}, \dots, w_{g})$.
	
	Taking the kernel of $c - 1$ on \eqref{0023},
	we have an exact sequence
		\[
				0
			\to
				\bigoplus_{n = 1}^{g}
					\Z z_{n}
			\to
				\bigoplus_{n = 1}^{g}
					\R z_{n}
			\to
				A(\R)^{0}
			\to
				0,
		\]
	where $A(\R)^{0} \subset A(\R)$ is the identity component.
	Hence
		\[
				\int_{A(\R)^{0}}
					d z_{1} \wedge \dots \wedge d z_{g}
			=
				1,
		\]
	so
		\begin{equation} \label{0027}
				\int_{A(\R)}
					d z_{1} \wedge \dots \wedge d z_{g}
			=
				\# \pi_{0}(A(\R)).
		\end{equation}
	Taking the kernel of $c - 1$ on \eqref{0024},
	we have an exact sequence
		\[
				0
			\to
				\Hom_{\Z}(\Lambda_{c = -1} / \tor, \Z)
			\to
				\Hom_{\R}(V_{c = - 1}, \R)
			\to
				B(\R)^{0}
			\to
				0,
		\]
	where $(\var) / \tor$ denotes the torsion-free quotient.
	Define a real torus $T$ by the exact sequence
		\[
				0
			\to
				\bigoplus_{n = 1}^{g}
					\Z w_{n}^{\ast}
			\to
				\bigoplus_{n = 1}^{g}
					\R w_{n}^{\ast}
			\to
				T
			\to
				0.
		\]
	The map $c - 1 \colon \Lambda_{c = -1} \to \Lambda^{c = -1}$
	induces a commutative diagram with exact rows and columns
		\[
			\begin{CD}
				@.
					0
				@.
					0
				\\ @. @VVV @VVV \\
					0
				@>>>
					\bigoplus_{n = 1}^{g}
						\Z w_{n}^{\ast}
				@>>>
					\bigoplus_{n = 1}^{g}
						\R w_{n}^{\ast}
				@>>>
					T
				@>>>
					0
				\\ @. @VVV @VV \wr V @VVV \\
					0
				@>>>
					\Hom_{\Z}(\Lambda_{c = -1} / \tor, \Z)
				@>>>
					\Hom_{\R}(V_{c = - 1}, \R)
				@>>>
					B(\R)^{0}
				@>>>
					0
				\\ @. @. @VVV @VVV \\
				@.
				@.	
					0
				@.
					0.
			\end{CD}
		\]
	The cokernel of the left vertical arrow
	or, equivalently, the kernel of the right vertical arrow
	is isomorphic to the Pontryagin dual of
	$\Lambda^{c = -1} / (c - 1) \Lambda$.
	We have
		\[
				\Lambda^{c = -1} / (c - 1) \Lambda
			\cong
				\Hat{H}^{1}(\genby{c}, \Lambda)
			\cong
				\Hat{H}^{0}(\genby{c}, A(\C))
			\cong
				\pi_{0}(A(\R))
		\]
	by \cite[Chapter I, Remark 3.7]{Mil06},
	where $\Hat{H}$ denotes Tate cohomology.
	Let $d w_{1}, \dots, d w_{g}$ be the differential forms on
	$T$ or $B(\R)^{0}$ corresponding to $w_{1}, \dots, w_{g}$.
	Then
		\[
				\int_{T}
					d w_{1} \wedge \dots \wedge d w_{g}
			=
				1
		\]
	and hence
		\[
				\int_{B(\R)^{0}}
					d w_{1} \wedge \dots \wedge d w_{g}
			=
				\frac{1}{\# \pi_{0}(A(\R))}.
		\]
	Both $(d w_{1}, \dots, d w_{g})$ and
	$(d z_{1}^{\ast}, \dots, d z_{g}^{\ast})$ form $\C$-bases of $\Lie A(\C)$
	related by
		\[
				(d z_{1}^{\ast}, \dots, d z_{g}^{\ast})
			=
				(d w_{1}, \dots, d w_{g}) 2 Q^{-1}.
		\]
	Hence
		\[
				\int_{B(\R)^{0}}
					(\det M) d z_{1}^{\ast} \wedge \dots \wedge d z_{g}^{\ast}
			=
				\frac{2^{g} \det M}{\det Q \cdot \# \pi_{0}(A(\R))}.
		\]
	We have an exact sequence
		\[
				0
			\to
				\Lambda / (\Lambda^{c = 1} \oplus \Lambda^{c = -1})
			\stackrel{c - 1}{\to}
				\Lambda^{c = -1} / 2 (\Lambda^{c = -1})
			\to
				\Lambda^{c = -1} / (c - 1) \Lambda
			\to
				0.
		\]
	The middle term has $2^{g}$ elements
	and the right term has $\# \pi_{0}(A(\R))$ elements.
	Hence
		\begin{equation} \label{0030}
				\# \bigl(
					\Lambda / (\Lambda^{c = 1} \oplus \Lambda^{c = -1})
				\bigr)
			=
				\frac{2^{g}}{\# \pi_{0}(A(\R))},
		\end{equation}
	so
		\[
				\det Q
			=
				\frac{2^{g}}{\# \pi_{0}(A(\R))} \det M.
		\]
	Thus
		\[
				\int_{B(\R)^{0}}
					(\det M) d z_{1}^{\ast} \wedge \dots \wedge d z_{g}^{\ast}
			=
				1,
		\]
	so
		\begin{equation} \label{0028}
				\int_{B(\R)}
					(\det M) d z_{1}^{\ast} \wedge \dots \wedge d z_{g}^{\ast}
			=
				\pi_{0}(B(\R)).
		\end{equation}
	By \eqref{0026}, \eqref{0027} and \eqref{0028},
	we are reduced to showing that
		\[
				\# \pi_{0}(A(\R))
			=
				\# \pi_{0}(B(\R)).
		\]
	This follows from
		\[
				\# \bigl(
					\Lambda / (\Lambda^{c = 1} \oplus \Lambda^{c = -1})
				\bigr)
			=
				\frac{2^{g}}{\# \pi_{0}(B(\R))},
		\]
	which itself follows from \eqref{0030} with $c$ replaced by $- c$
	and \cite[Chapter I, Remark 3.7]{Mil06}.
\end{proof}

Let $A$ be an abelian variety over a number field $K$
with N\'eron model $\mathcal{A}$.
For a real place $v$ of $K$,
we give a Riemannian metric on $\omega_{A \times_{K} K_{v}}$
by \eqref{0019}.
For a complex place $v$ of $K$,
we give a Hermitian metric on $\omega_{A \times_{K} K_{v}}$
by \eqref{0018} with $C(g) = 1$.%
\begin{lrbox}\myVerb%
    \footnotesize\verb|https://virtualmath1.stanford.edu/~conrad/BSDseminar/Notes/L3.pdf|%
\end{lrbox}%
\footnote{
	The ``$2$'' and ``$2^{\dim A}$''
	in front of complex periods in
	\cite[Conjecture 2.1 (2)]{DD10}
	and \cite[Definition 2.1]{DD15}, respectively,
	should both be replaced by ``$1$''
	in accordance with Exercise 6.4 of Brian Conrad's seminar notes
	``N\'eron models, Tamagawa factors, and Tate--Shafarevich groups''
	available at:
    \par\noindent
    \usebox\myVerb
}
This defines a metrized bundle structure on $\omega_{\mathcal{A}}$,
which we call the \emph{BSD metrized bundle structure}.
Its degree is the definition of the \emph{global period} of $A / K$
as in \cite[Conjecture 2.1 (2)]{DD10} and \cite[Definition 2.1]{DD15}.

\begin{theorem} \label{0031}
	Let $B$ be dual to $A$.
	Then the isomorphism
	$\omega_{\mathcal{A}} \cong \omega_{\mathcal{B}}$
	in Theorem \ref{0011}
	preserves the BSD metrized bundle structures.
	In particular, $A$ and $B$ have the same global period.
\end{theorem}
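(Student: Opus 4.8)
The plan is to observe that the BSD metrized bundle structure on $\omega_{\mathcal{A}}$ is built place by place out of the metrics at the infinite places of $K$, so that the theorem splits into the underlying $\Order_{K}$-module statement, already proved, together with purely local metric comparisons over $\C$ and $\R$. First I would invoke Theorem \ref{0011}: under the canonical isomorphism $\omega_{A} \cong \omega_{B}$ of $K$-vector spaces from \eqref{0013}, the lattice $\omega_{\mathcal{A}}$ maps isomorphically onto $\omega_{\mathcal{B}}$, which handles the finite part of the metrized bundle structure. It then suffices to check that the induced metric is preserved at each infinite place $v$.

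The key compatibility I would record is that the isomorphism \eqref{0013} commutes with base change, being constructed from deformation theory, the cup-product isomorphism of \cite[Chapter VII, Section 4.21, Theorem 10]{Ser88} and Serre duality, each of which is stable under extension of scalars. Consequently, base-changing $\omega_{A} \cong \omega_{B}$ along $K \into K_{v}$ produces exactly the isomorphism $\omega_{A \times_{K} K_{v}} \cong \omega_{B \times_{K} K_{v}}$ treated in Propositions \ref{0005} and \ref{0020}. I would then separate into cases: for a complex place $v$ the BSD metric is the Hermitian metric \eqref{0018} with $C(g) = 1$, preserved by Proposition \ref{0005}; for a real place $v$ it is the Riemannian metric \eqref{0019}, preserved by Proposition \ref{0020}. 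Combining these with the lattice statement shows that $\omega_{\mathcal{A}} \cong \omega_{\mathcal{B}}$ is an isometry of BSD metrized line bundles.

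The final assertion then follows formally by taking Arakelov degrees, since an isometry of metrized line bundles over $\Spec \Order_{K}$ has equal degree on either side and the global period is by definition that degree. I do not anticipate a genuine obstacle, as all the analytic work has already been done in Propositions \ref{0005} and \ref{0020}; the only point deserving explicit mention is the base-change compatibility of \eqref{0013}, which is what lets the single isomorphism over $K$ restrict to the correct local isomorphism at every archimedean place.
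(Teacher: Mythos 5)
Your proposal is correct and matches the paper's argument, which simply cites Proposition \ref{0020} for the real places (with Proposition \ref{0005} handling the complex places, as already used for Theorem \ref{0012}) on top of the lattice statement of Theorem \ref{0011}. You merely make explicit the place-by-place decomposition and the base-change compatibility of \eqref{0013} that the paper leaves implicit.
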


\begin{proof}
	This follows from Proposition \ref{0020}.
\end{proof}

This proves the part of Theorem \ref{0029}
for the BSD metrized bundle structure,
finishing the proof of Theorem \ref{0029} itself.

\begin{remark}
	Proposition \ref{0005}
	(respectively, Proposition \ref{0020})
	is more generally true for complex tori $A$
	(respectively, complex tori $A$ with complex conjugation)
	by the same proof.
\end{remark}


\end{document}